\newcommand{\R}{\mathbb{R}} 
\newcommand{\II}{\mathbbm{1}}
\newcommand{\pt}{\partial_t}
\newcommand{\FL}[1]{(-\Delta)^{#1}}
\newcommand{\invHL}{(-\Delta)^{-\frac12}}
\newcommand{\rbracket}[1]{\left(#1\right)}
\newcommand{\sbracket}[1]{\left[#1\right]}
\newcommand{\cbracket}[1]{\left\{#1\right\}}
\newcommand{\normp}[2]{\left\| #1 \right\|_{#2}}
\newcommand{\seminormp}[2]{\left[ #1 \right]_{#2}}
\newcommand{\abs}[1]{ \left| #1 \right|}
\newcommand{\Morrey}[2]{\widetilde{L}^{#1}(#2)}
\newcommand{\wMorrey}[2]{\widetilde{L}^{#1}_*(#2)}
\newcommand{\Campanato }[2]{\widetilde{\mathscr{L}}^{#1}(#2)}
\newcommand{\wCampanato }[2]{\widetilde{\mathscr{L}}^{#1}_*(#2)}
\newcommand{\pRieszP}[2]{\widetilde{I}_{#1}(#2)}
\newcommand{\bbN}{\mathbb{N}}
\newcommand{\bbR}{\mathbb{R}}
\newcommand{\bbS}{\mathbb{S}}
\newcommand{\calR}{\mathcal{R}}
\newcommand{\scrL}{\mathscr{L}}
\DeclareMathOperator{\diam}{\mathrm{diam}}
\renewcommand{\div}{\text{div}}
\DeclareMathOperator{\curl}{curl}
\DeclareMathOperator{\Id}{\mathbb{I}}
\def\Xint#1{\mathchoice
{\XXint\displaystyle\textstyle{#1}}%
{\XXint\textstyle\scriptstyle{#1}}%
{\XXint\scriptstyle\scriptscriptstyle{#1}}%
{\XXint\scriptscriptstyle
\scriptscriptstyle{#1}}%
\!\int}
\def\XXint#1#2#3{{%
\setbox0=\hbox{$#1{#2#3}{\int}$}
\vcenter{\hbox{$#2#3$}}\kern-.5\wd0}}
\def\dashint{\Xint-}
\renewcommand{\leq}{\leqslant}
\renewcommand{\geq}{\geqslant}
\renewcommand{\subset}{\subseteq}
\theoremstyle{plain}
\newtheorem{theorem}{Theorem}[section]
\newtheorem{lemma}[theorem]{Lemma}
\theoremstyle{definition}
\newtheorem{definition}[theorem]{Definition}
\theoremstyle{remark}
\newtheorem{remark}[theorem]{Remark}
\newcommand{\RR}{\mathbb{R}}
\begin{document}

%

%
\title[Surface flow for liquid crystals]{Liquid crystals and topological vorticity: smoothness of mild solutions}

\author[F.Lin]{Fanghua Lin}
\address{Courant Institute of Mathematical Sciences, New York University, NY 10012, USA}
\email{linf@cims.nyu.edu}
\author[Y. Sire]{Yannick Sire}
\address{\noindent
Department of Mathematics,
Johns Hopkins University, 3400 N. Charles Street, Baltimore, MD 21218, USA}
\email{ysire1@jhu.edu,ywu212@jhu.edu}

\author[Y. Wu]{Yantao Wu}

\author[Y. Zhou]{Yifu Zhou}
\address{\noindent
School of Mathematics and Statistics, Wuhan University, Wuhan 430072, China}
\email{yifuzhou@whu.edu.cn}

%

\begin{abstract}
We introduce several new models whose common feature is to take into account effects from topological vorticity. The macroscopic unknown is driven by a dissipative anomalous diffusion (of SQG-type) and is coupled with the orientation of the crystal, moving by the gradient flow of the energy of maps. The main idea of such models is to have a better insight on the vorticity formulation of the Liquid Crystal Flow and to tackle some regularity issues in the associated conserved  geometric motions. One of the advantage of the present PDEs is to capture features of the Navier-Stokes equations (or Euler) through a {\sl scalar} unknown, keeping the advection-diffusion structure of the orientation field. We obtain regularity for mild solutions under natural assumptions for the initial data, which are actually near-optimal. Along the way, we also draw some links with natural models of (anti-)ferromagnets previously investigated. 

\end{abstract}

\maketitle

\tableofcontents 

\section{Introduction}

{In this paper, we introduce several new models describing the flow of liquid crystals. Our starting point is the following system for the flow of a liquid crystal for nematic fluids: for $\Omega \subset \mathbb R^2$ a smooth domain, consider the system of PDEs
\begin{equation}\label{oLCF} \ \ \
\begin{cases}
\partial_t u + u\cdot \nabla u +\nabla P = \nu \Delta u - \lambda \nabla\cdot \left(\nabla d \odot \nabla d-\frac12 |\nabla d|^2 \mathbb{I}\right)~&\mbox{ in }~(0,T) \times \mathbb{R}^2,\\
\nabla\cdot u =0~&\mbox{ in }~(0,T) \times \mathbb{R}^2,\\
\partial_t d+u\cdot\nabla d=\gamma(\Delta d +|\nabla d|^2 d) ~&\mbox{ in }~(0,T) \times \mathbb{R}^2,\\
\end{cases}
\end{equation}
where $u:\Omega\times[0,T)\to\R^2$ is the fluid velocity field, $P:\Omega\times[0,T)\to\R$ is the fluid pressure function, $d:\Omega\times[0,T)\to\mathbb{S}^2$ stands for the orientation unit vector field of nematic liquid crystals and $\mathbb I$ is the identity matrix on $\R^2$. Note that the coupling involves the term $(\nabla d \odot\nabla d)_{ij}:=\nabla_i d\cdot\nabla_j d$. In the plane a natural quantity to consider instead of the velocity $u$ is the vorticity $\omega:=\nabla^\perp \cdot u$; however, it is clear that because of the coupling term in the first equation, the vorticity satisfies a PDE whose forcing is more singular than the one for the original unknown $u$. This in particular creates tremendous difficulties. On the other hand, several well-known equations reflecting geometric conserved motions involve (variations of ) the vorticity and it would be desirable to have a better understanding of such. One of our goal is to address such issues. More concretely, we derive and take up the analysis a wealth of various models taking the following form: 
\begin{equation}\label{HMF-SQG}
\left\{
\begin{aligned}
&\partial_t \theta +u\cdot \nabla \theta+ \nu (-\Delta)^a \theta= \lambda F(d,D^\beta d)~&\mbox{ in }~  (0,T) \times \mathbb{R}^2 \\
&u=\nabla^{\perp}(-\Delta)^{-1+\alpha}\theta,\quad \nabla\cdot u=0~&\mbox{ in }~(0,T) \times \mathbb{R}^2\\
&\partial_t d+u\cdot \nabla d= \gamma( \Delta d +|\nabla d|^2d) ~&\mbox{ in }~(0,T) \times \mathbb{R}^2\\
\end{aligned}
\right.
\end{equation}
where the parameters satisfy $0\leq a,~\alpha\leq 1$. The coupling term $F(d,D^\beta d)$ is function of the orientation field $d$ and some of its derivatives (as we will see below, $\beta$ only depends on $\alpha$).  The temperature $\theta$ defined on $\mathbb R^2\times (0,T)$ is a scalar, the velocity field $u: \mathbb R^2 \to \mathbb R^2$ depends constitutively on $\theta$, and the field $d: \mathbb R^2\times (0,T) \to \mathbb S^2$ describes the orientation of the crystal.
 Three positive constants $\nu $, $\lambda$, and $\gamma$ respectively quantifies viscosity, the competition between kinetic energy and elastic energy, microscopic elastic relaxation time for the director field. Here we assume that $\lambda=\gamma$ such that we have conservation property in {\it basic energy law} \eqref{basicenergy}. Without loss of generality, we assume that $\nu = \lambda = \gamma=1$  since the exact values of these constants play no role in our qualitative results.

The formulation of the original liquid crystal system in terms of the vorticity allows effectively to introduce several parameters in the PDEs, which tune the strength of diffusion and the Biot-Savart law. The surface quasi-geostrophic equation (SQG) then appears naturally. The systems then consist in a heat equation of the orientation (the heat flow of harmonic maps) coupled with  a {\sl scalar} equation. Those models exhibit several scales reminiscent of criticality. The derivation of system \eqref{HMF-SQG} leads to a {\sl $2-$parameter} and one can then exhibit different regimes depending on the order of the diffusion $(-\Delta)^a$ and the order of the forcing term $F(d,D^\beta d)$. We always choose the Biot-Savart law to be of the form $u=\nabla^{\perp}(-\Delta)^{-1+\alpha}\theta$ (which amounts to a generalized SQG relation). The requirement to satisfy dissipation of energy implies some admissible structures for the forcing $F$ whose order have to be measured with  respect to diffusion. 
  
\smallskip
 
It should be noted that the values of the parameters $a,\alpha$ play an important role in the analysis of system \eqref{HMF-SQG}. Indeed, it is straightforward to see that the system \eqref{HMF-SQG} involves the Euler equation in vorticity whenever $a=\alpha=0$; the modified inviscid SQG for $a=0<\alpha<1/2$; and the modified dissipative SQG for $0<\alpha<1/2$. The dissipative/inviscid surface quasi-geostrophic equation has been introduced in \cite{CMT} as a toy model for the regularity of Navier-Stokes equations. For $d \geq 2$, dissipative SQG is the following drift-diffusion equation given by 
\begin{align*}\label{eq1}
&\partial_t \theta + u\cdot \nabla \theta+ (-\Delta)^a \theta = F ~~\text{ in }~~(0,T)\times\Omega \subset \mathbb{R} \times\mathbb{R}^d,
\end{align*}
where as before $\theta:(0,T)\times\mathbb{R}^2 \to \mathbb{R}$, $u= \nabla^{\perp}(-\Delta)^{-1+a}\theta :  (0,T)\times\mathbb{R}^2 \to \mathbb{R}^2$ and $F: (0,T)\times\mathbb{R}^2 \to \mathbb R$ is a possibly singular forcing.  Dissipative SQG is subcritical for $a>1/2$, critical for $a=1/2$ and supercritical otherwise. Note that the motion is incompressible since $\div \, u=0$. We refer the reader to the nice survey by Kiselev  \cite{kiselev} for a discussion of the equation, recent results and open questions. In particular, the critical case (for regularity) has attracted a lot of attention in the recent years after the breakthrough of Caffarelli and Vasseur (see \cite{CaffaVasseur,KNV,KN}). 

\smallskip
 
 In Equation \eqref{HMF-SQG}, it is clear that the right hand side of the first equation acts as a forcing term on SGQ. In view of this very particular structure, we introduce the following trichotomy: 
\begin{itemize}
\item The {\sl critical} case corresponds to the values $a=\alpha=1/2$;
\item The {\sl subcritical } range corresponds to $a>\alpha=1/2$;
\item The {\sl supercritical } range corresponds to $a < \alpha=1/2$. 
\end{itemize}

 For previous results on nonlocal elliptic and parabolic equations without drift see for instance \cite{KassCalcVar,CCV,FelsKass,DKP,Coz17,KaWe22a}. Let us mention also the articles \cite{SilvestreAdv, SilvestrePisa, SilvestreIndiana}, where H\"older estimates have been obtained for nonlocal drift-diffusion equations using non-variational techniques.

The theory of liquid crystals has been developed in many seminal works by  Ericksen \cite{Ericksen1962ARMA} and Leslie
\cite{Leslie1968ARMA}.  In \cite{L1989CPAM}, the first author considered a simplified model (see references therein for previous works) that is precisely the system \eqref{oLCF}.  Though simplified, it still enjoys the same type of energy law, coupling structure and dissipative properties as those latter works. For the study of the nematic liquid crystal flows with Dirichlet or Neumann boundaries, there have been growing interests concerning the global existence of weak solutions, partial regularity results, singularity formation and others (see e.g. \cite{L1989CPAM,LL1995CPAM,LL1996DCDS,LLW10,LW2014RSTA,LW2016CPAM,HLLW2016ARMA,LCF2D} and references therein). For instance, for the  Dirichlet boundary condition with $u\equiv0$ on $\partial\Omega$ and $d|_{\partial\Omega}=d_0\in C^{2,\beta}(\partial\Omega,\bbS^2)$,  the following properties for weak solutions $(d,u)$ have been proved: global (in time) existence of weak solutions, smoothness except for finitely many singular time and a discrete set of points, interior energy concentration at singular time,  characterization of tangent map, asymptotic decomposition of energy density measure (see e.g. \cite{LLW10} in two dimensions). 

\medskip

The analysis of system \eqref{oLCF} on planar domains is by now relatively well developed; however, the existence of weak solutions in higher dimensions, together with their (partial) regularity is still vastly open, despite some recent important results. On the other hand, the so-called {\sl topological vorticity} and the associated conserved geometric motions exhibit great analytical challenges and only partial results are available. The purpose of this paper is in particular to derive several PDEs involving vorticity, and as a first step towards their analysis prove global  smoothness under natural conditions on the unknowns.

We now describe a bit more in details the PDEs we consider. The forcing term $F(d,D^\beta d)$ can be chosen as 

\begin{equation*}
 F_1(d,D^\beta d):=(-\Delta)^{-1+\alpha} \nabla^\perp \nabla\cdot\left(\nabla d\otimes \nabla d -\frac12|\nabla d|^2\mathbb I_2\right),
\end{equation*}
and the quantity $\displaystyle{\mathcal E_1(t):=\int_{\mathbb R^2} | \theta|^2 +|\nabla d|^2}$ will be dissipated. Alternatively, the forcing term can be chosen as
\begin{equation*}
 F_2(d,D^\beta d):=\nabla^\perp \nabla\cdot\left(\nabla d\otimes \nabla d -\frac12|\nabla d|^2\mathbb I_2\right)
\end{equation*}
and the dissipated energy is  $\displaystyle{\mathcal E_2(t):=\int_{\mathbb R^2} |(-\Delta)^{(\alpha-1)/2} \theta|^2 +|\nabla d|^2.}$ In the hydrodynamic theory of liquid crystals, the stress tensor defined by
$$
\Xi :=\nabla d\odot\nabla d -\frac12|\nabla d|^2\Id_2,
$$
plays an important role since it is related to {\it total topological vorticity} of the system. 

\smallskip

It is important to notice that the L. H. S. of \eqref{HMF-SQG} is of order $2a$, while the R.H.S. is of order $1+2\alpha$ whenever $F_1(d,D^\beta d)=(-\Delta)^{-1+\alpha} \nabla^\perp \nabla \cdot \,\Xi$ and of order $3$ whenever $F_2(d,D^\beta d)=\nabla^\perp \nabla \cdot \,\Xi$. 

\smallskip

In order to illustrate features of our models and initiate their analysis, we choose here the following {\sl ad hoc} values for the parameters: $a \in (\frac12,1)$ and $\alpha=\frac12$ and the forcing to be 
$F(d,D^\beta d)=(-\Delta)^{-\frac12 } \nabla^\perp \nabla \cdot \,\Xi$. This amounts to investigate 
\begin{equation}\label{eqn: SQGHMF}
\begin{cases}
\pt \theta + u\cdot\nabla\theta +\FL{a} \theta =  (-\Delta)^{-\frac12} \nabla^\perp \nabla \cdot \,\Xi, \\
u = \calR^\perp \theta = \nabla^\perp (-\Delta )^{-\frac12} \theta, \quad \nabla\cdot u =0, \\
\pt d + u\cdot\nabla d = \Delta d + |\nabla d|^2d.
\end{cases}
\end{equation}
In the previous system, the fractional laplacian $\FL{a} $ stands for the Fourier multiplier of symbol $|\xi|^{2a}$, also being defined by the hypersingular integral (for suitable functions $\theta$)
\[ \FL{a}\theta(x) = c(n,a) P.V. \int_{\bbR^2} \frac{\theta(x)-\theta(y)}{|x-y|^{2+2a}} dy. \]

\smallskip

In Section 2, we provide a rationale for the  derivation of  \eqref{HMF-SQG} from the original system \eqref{oLCF} formulated in vorticity. 
The operator $\mathcal R^\perp$ is the (rotated) vectorial Riesz transform, which maps $L^p(\bbR^2)$ into itself for any $p \in (1,\infty)$. Dealing with the existence of global weak or mild solutions for \eqref{eqn: SQGHMF} amounts to find the natural space of initial data $(\theta_0,d_0)$ such that the term  $(-\Delta)^{-\frac12}\nabla^\perp \nabla \cdot \,\Xi $ can be seen as a suitable forcing term for the dissipative SQG equation. Notice that the energy dissipation holds for weak solutions; and in the present contribution we consider specifically mild solutions, i.e. semi-group solutions. The reason why we consider this notion of solution is to make the presentation more transparent. In virtue of the actual state of art for the full system \eqref{oLCF} we expect that weak solutions in two-dimensional domains for \eqref{eqn: SQGHMF} should exist globally in time and be even partially smooth. We postpone such results to future installments of our investigation.

\medskip

\subsection*{Main results}
The aim of the present work is to prove global regularity for {\sl mild} solutions of system \eqref{eqn: SQGHMF}. Mild solutions are basically semi-group solutions and we refer to Section 3 for a precise definition. For the moment, we keep in mind that such solutions are defined in an integral sense using the heat kernel of $\partial_t +(-\Delta)^a$. 
{We would like to stress that we do not prove the existence of mild solutions in this paper. Notice that in the original Liquid Crystal Model \eqref{oLCF} the forcing term (the R.H.S. of the first equation) has the same number of derivatives as the main term of the equation. This is not necessarily the case for the models \eqref{HMF-SQG}. In the particular situation we consider $a \in (1/2,1)$ and $\alpha=1/2$, the coupling term $F$ has two derivatives where as the main term has $2a<2$. This creates of course several complications. This is one of the reasons why we focus here on conditional global regularity.  It should be noted that the existence theory of weak or mild solutions, together with their partial regularity, is not at all an easy problem since the forcing term in the fluid equation is rather singular.  }

We would like to emphasize the following two crucial points on our systems:

\begin{itemize}
\item The vorticity formulation allows to reduce the fluid equation to a scalar one. 
\item The forcing term has one more derivative than in the original system \eqref{oLCF}. 
\end{itemize}

Our main result is the following. 

\begin{theorem}\label{Thm: Reg-intro}
Assume that a mild solution of \eqref{eqn: SQGHMF} satisfies  $\theta_0\in L^{\frac{2}{2a-1}}(\bbR^2), ~\nabla d_0\in L^2(\bbR^2),~ \theta\in L^2\cap L^{p_0}((0,T)\times \bbR^2)$ for some $p_0>\frac{2a+2}{2a-1}$, and $|\nabla d| \in L^2\cap L^{q_0}((0,T)\times \bbR^2)$ for some $q_0>4$,
then this solution to \eqref{eqn: SQGHMF} is classical: $\theta,u,d\in C^\infty((0,T)\times\bbR^2)$.
\end{theorem}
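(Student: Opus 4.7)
The plan is a parabolic bootstrap that leverages the Serrin-type integrability hypotheses on $\theta$ and $\nabla d$ to decouple the two equations at the regularity level. Two structural observations are essential. First, the source in the $\theta$-equation factors as $(-\Delta)^{-\frac12}\nabla^\perp\nabla\cdot\Xi = \calR^\perp\cdot(\nabla\cdot\Xi)$, i.e.\ a zeroth-order Calder\'on--Zygmund operator applied to the first divergence of the quadratic quantity $\Xi$; second, since $\nabla\cdot u=0$, the drift is already in divergence form $u\cdot\nabla\theta=\nabla\cdot(u\theta)$. Writing the Duhamel formula
\[
\theta(t) = e^{-t\FL{a}}\theta_0 - \int_0^t \nabla\cdot e^{-(t-s)\FL{a}}(u\theta)\,ds + \int_0^t \calR^\perp\cdot\nabla\cdot e^{-(t-s)\FL{a}}\Xi\,ds,
\]
together with the analogous mild representation for $d$ using the standard heat semigroup, every Duhamel integrand is able to absorb one derivative onto the kernel, which is the engine of the iteration.

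First I would upgrade $\theta$ to $L^\infty_{\mathrm{loc}}((0,T)\times\bbR^2)$. Since $\calR^\perp$ is bounded on $L^p$ for $1<p<\infty$, the assumption $\theta\in L^{p_0}$ yields $u\in L^{p_0}$, and the condition $p_0>(2a+2)/(2a-1)$ is precisely the subcritical Serrin threshold $\tfrac{2a}{p}+\tfrac{2}{q}<2a-1$ with $p=q=p_0$ for the $2a$-fractional drift-diffusion equation (recall $a>\tfrac12$). Combined with $\Xi\in L^{q_0/2}_{t,x}$ for some $q_0/2>2$, one applies either a De Giorgi/Moser iteration in the spirit of the dissipative SQG theory of Caffarelli--Vasseur or, equivalently, iterates the smoothing bound $\|\nabla e^{-t\FL{a}}f\|_{L^r(\bbR^2)}\lesssim t^{-\frac{1}{2a}(1+2(\frac{1}{p}-\frac{1}{r}))}\|f\|_{L^p(\bbR^2)}$ directly in the mild formulation; after finitely many rounds this produces $\theta\in L^\infty_{\mathrm{loc}}$, which then also gives H\"older continuity by the standard subcritical regularization.

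With $u=\calR^\perp\theta\in L^\infty_{\mathrm{loc}}$, the $d$-equation is the harmonic map heat flow into $\SS^2$ perturbed by a bounded divergence-free drift, and $|\nabla d|\in L^{q_0}_{t,x}$ with $q_0>4$ sits strictly above the Serrin exponent $4$ of the two-dimensional harmonic map heat flow; Struwe's $\varepsilon$-regularity theorem, readily adapted to accommodate a bounded drift, then yields $d\in C^\infty_{\mathrm{loc}}((0,T)\times\bbR^2;\SS^2)$. Consequently $\Xi$ and $\nabla\cdot\Xi$ become locally smooth, and substituting back into the Duhamel formula for $\theta$ both the drift and the source are smooth, so standard Schauder theory for the subcritical fractional drift-diffusion equation promotes $\theta$, and hence $u$, to $C^\infty$. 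The principal difficulty is precisely to arrange this decoupling: each half of the bootstrap must consume only integrability, not regularity, of the other unknown, which is exactly what the divergence structure of $u\theta$ together with the Riesz factorization $\calR^\perp\nabla\cdot\Xi$ supplies by allowing one derivative to land on the kernel at each step; the strictness of both thresholds $p_0>(2a+2)/(2a-1)$ and $q_0>4$ is then what leaves room for a quantitative gain of integrability at every iteration.
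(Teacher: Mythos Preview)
Your structural observations are correct and match the paper's setup: the Duhamel representation with one derivative absorbed onto the kernel, and the identification of the Serrin threshold $(2a+2)/(2a-1)$ for $\theta$, are exactly right. However, the decoupling strategy you propose has a genuine gap at its very first step.

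You want to upgrade $\theta$ to $L^\infty_{\mathrm{loc}}$ using only the fixed information $\Xi\in L^{q_0/2}_{t,x}$ with $q_0/2>2$. This fails because that integrability of the divergence-form source is \emph{supercritical}. Under the scaling of $\partial_t+(-\Delta)^a$, a forcing $\nabla\cdot g$ is critical precisely when $g\in L^{(2a+2)/(2a-1)}_{t,x}$, and for $a\in(\tfrac12,1)$ one has $(2a+2)/(2a-1)>4>2$, so $q_0/2>2$ is far below threshold. Concretely, the Duhamel term $J_2=\int_0^t\calR^\perp\!\cdot\nabla e^{-(t-s)(-\Delta)^a}\Xi\,ds$ obeys the same parabolic Riesz-potential bound as in the paper, $|J_2|\lesssim\widetilde I_{2a-1}(|\nabla d|^2)$, and with $|\nabla d|^2\in L^{q_0/2}$ this places $J_2$ only in $L^{p_2}$ with
\[
p_2=\frac{(2+2a)q_0}{4+4a-(2a-1)q_0}\xrightarrow[\,q_0\downarrow 4\,]{}\frac{2+2a}{2-a}<\frac{2+2a}{2a-1}\qquad(a<1).
\]
So no matter how many times you iterate the $\theta$-equation alone, $J_2$ pins $\theta$ below its own Serrin exponent and the bootstrap stalls. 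Neither a Caffarelli--Vasseur/De Giorgi scheme nor direct smoothing estimates can bypass this, because the energy space for $(-\Delta)^a$ is $\dot H^a$ with $a<1$ and cannot absorb a full gradient from $\nabla\cdot g$ unless $g$ already sits above $(2a+2)/(2a-1)$.

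The paper's proof avoids exactly this obstruction by running a \emph{coupled} bootstrap: one also writes the mild formula for $d$ via the standard heat kernel, observes that $|\nabla d|\lesssim |P_{1,t}\nabla d_0|+\widetilde I_1(|\nabla d|^2+|u||\nabla d|)$, and then alternately improves the exponents $(p_i,q_i)$ for $(\theta,\nabla d)$ via the recursions
\[
p_{i+1}=\frac{(2+2a)\min(p_i,q_i)}{4+4a-(2a-1)\min(p_i,q_i)},\qquad q_{i+1}=\frac{4\min(p_i,q_i)}{8-\min(p_i,q_i)},
\]
which push both exponents to $+\infty$ in finitely many steps precisely because $p_0>\tfrac{2a+2}{2a-1}$ and $q_0>4$. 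Only after $\theta,u,\nabla d\in L^p$ for all $p<\infty$ does the paper pass to H\"older regularity (via Campanato estimates on the Duhamel integrals) and then Schauder. A secondary issue in your outline: even if $\theta\in L^\infty_{\mathrm{loc}}$, the inference $u=\calR^\perp\theta\in L^\infty_{\mathrm{loc}}$ is false, since Riesz transforms do not map $L^\infty$ to $L^\infty$; you would only get $u\in\mathrm{BMO}$, which would require further justification before feeding into Struwe's $\varepsilon$-regularity.
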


\smallskip

 The regularity result in Theorem \ref{Thm: Reg-intro} is near optimal in the following sense. Recall that for original version of nematic liquid crystal flow \eqref{oLCF}, it is shown in \cite{LLW10} that the assumptions of the  {\sl weak} solution  to be as $u,~\nabla d\in L^4((0,T)\times\bbR^2)$ indeed gives  smoothness. On the other hand, one can formally view original liquid crystal flow \eqref{oLCF} (in vorticity formulation) as the special case of \eqref{eqn: SQGHMF} with $a=1$ (and in our case we even have that $u$ is the Riesz transform of $\theta$ and then $u$ and $\theta$ have the same integrability by boundedness of singular integrals). Our theorem then generalizes the regularity result to {\sl subcritical} fractional Laplacian in a near optimal way: we need $p_0>\frac{2a+2}{2a-1},~q_0>4$.

\smallskip

To prove Theorem \ref{Thm: Reg-intro}, we argue in three steps we now describe since they are of independent interest. The first step consists in arguing higher integrability through the following Theorem.

\begin{theorem}\label{Prop: Reg1-intro}
Assume that mild solution of \eqref{eqn: SQGHMF} satisfies  $\theta_0\in L^{\frac{2}{2a-1}}(\bbR^2), ~\nabla d_0\in L^2(\bbR^2), ~\theta\in L^2\cap L^{p_0}((0,T)\times \bbR^2)$ for some $p_0>\frac{2a+2}{2a-1}$, and $|\nabla d| \in L^2\cap L^{q_0}((0,T)\times \bbR^2)$ for some $q_0>4$,
then we can conclude that $\theta~,u,~\nabla d \in L^p((0,T)\times \bbR^2)$ for any $p\in[2,\infty)$.
\end{theorem}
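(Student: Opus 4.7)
The plan is a simultaneous bootstrap on the space-time integrability of $\theta$ and $\nabla d$, starting from the mild (Duhamel) representations of both equations in \eqref{eqn: SQGHMF}. Using $\nabla\cdot u=0$ to rewrite $u\cdot\nabla\theta=\nabla\cdot(u\theta)$ and $u\cdot\nabla d=\nabla\cdot(u\otimes d)$, one obtains
\begin{align*}
\theta(t) &= e^{-t\FL{a}}\theta_0 - \int_0^t e^{-(t-s)\FL{a}}\nabla\cdot(u\theta)(s)\,ds + \int_0^t e^{-(t-s)\FL{a}} F_1(s)\,ds, \\
d(t) &= e^{t\Delta}d_0 - \int_0^t e^{(t-s)\Delta}\nabla\cdot(u\otimes d)(s)\,ds + \int_0^t e^{(t-s)\Delta}\bigl(|\nabla d|^2 d\bigr)(s)\,ds.
\end{align*}
The workhorses are the $L^p_x\to L^q_x$ smoothing estimates for $e^{-t\FL{a}}$ and $e^{t\Delta}$ (with gain of fractional spatial derivatives at the cost of factors $(t-s)^{-\sigma/(2a)}$ or $(t-s)^{-\sigma/2}$), the $L^p$-boundedness of the Riesz transform $\calR^\perp$ for $p\in(1,\infty)$ (so that $\|u\|_{L^p_x}\lesssim\|\theta\|_{L^p_x}$), and the parabolic Hardy--Littlewood--Sobolev lemma to handle the resulting time convolutions.

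Interpolating the two integrability hypotheses with their $L^2$ counterparts, one has $\theta\in L^{p_1}_{t,x}$ and $\nabla d\in L^{q_1}_{t,x}$ for every $(p_1,q_1)\in[2,p_0]\times[2,q_0]$; the crucial point is that the scaling-critical exponents $p_*:=(2a+2)/(2a-1)$ and $q_*:=4$ (fixed respectively by the SQG and harmonic-map-heat-flow scalings) lie strictly below $p_0,q_0$. Choosing $p_1>p_*$, $q_1>q_*$, one estimates the three contributions on the right-hand sides: the transport term for $\theta$ via $\|u\theta\|_{L^{p_1/2}_{t,x}}\lesssim\|\theta\|^2_{L^{p_1}_{t,x}}$; the transport term for $d$ via $\|u\otimes d\|_{L^{p_1}_{t,x}}\lesssim\|\theta\|_{L^{p_1}_{t,x}}$, using $|d|=1$; the harmonic-map reaction via $\||\nabla d|^2\|_{L^{q_1/2}_{t,x}}=\|\nabla d\|^2_{L^{q_1}_{t,x}}$; and the linear initial-data contributions via the standard semigroup dispersive bounds applied to $\theta_0\in L^{2/(2a-1)}$ and $\nabla d_0\in L^2$. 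After moving one spatial derivative onto the semigroup by integration by parts in the Duhamel integral (contributing the extra factor $(t-s)^{-1/(2a)}$ or $(t-s)^{-1/2}$) and applying HLS in time, one arrives at $\theta\in L^{p_2}_{t,x}$ and $\nabla d\in L^{q_2}_{t,x}$ with $p_2>p_1$ and $q_2>q_1$. Strict subcriticality of the inputs guarantees a strictly positive gap at each pass, so finitely many iterations drive the integrability indices past any prescribed $p<\infty$.

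The main obstacle is the forcing $F_1=(-\Delta)^{-1/2}\nabla^\perp\nabla\cdot\,\Xi$, which formally carries more spatial derivatives than the dissipation $\FL{a}$ (with $a<1$) can absorb in a single pass. The algebraic key is that $(-\Delta)^{-1/2}$ composed with a first-order derivative is a Riesz transform, so $F_1$ can be rewritten schematically as $\nabla\cdot\calR[\Xi]$, with $\calR$ a bounded tensor of Riesz transforms on every $L^p$, $p\in(1,\infty)$. Inside the Duhamel integral we transfer the remaining gradient onto the semigroup,
\begin{equation*}
\int_0^t e^{-(t-s)\FL{a}} F_1(s)\,ds \;=\; \int_0^t \nabla e^{-(t-s)\FL{a}}\bigl(\calR[\Xi]\bigr)(s)\,ds,
\end{equation*}
pick up the kernel factor $(t-s)^{-1/(2a)}$, and control $\|\calR[\Xi]\|_{L^{q_1/2}_x}\lesssim\|\nabla d\|^2_{L^{q_1}_x}$ by the boundedness of $\calR$. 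The resulting time kernel is locally integrable near $s=t$ precisely because $a>1/2$, which is the standing hypothesis in \eqref{eqn: SQGHMF}. Once this most singular piece is controlled, the bootstrap closes and produces $\theta,u,\nabla d\in L^p_{t,x}$ for every $p\in[2,\infty)$.
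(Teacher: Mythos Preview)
Your plan is the same Duhamel bootstrap as the paper's: split the mild formula for $\theta$ into the linear piece $P_{a,t}\theta_0$, the drift contribution, and the forcing contribution; do the analogous thing for $\nabla d$; and iterate. The paper implements this via the pointwise bounds \eqref{Est: HeatKernel} on $\nabla p_a$ and $\calR\nabla p_a$, which dominate the Duhamel integrals by fractional parabolic Riesz potentials $\pRieszP{2a-1}{|u\theta|}$, $\pRieszP{2a-1}{|\nabla d|^2}$ and $I_1(|G|)$; Lemma~\ref{Lem: Riesz potential1} then lifts $L^p\to L^{\tilde p}$ and produces the explicit recursions \eqref{Eqn: p_i}--\eqref{Eqn: q_i}. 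Your semigroup-smoothing plus time-HLS is simply another packaging of the same gain. In particular your handling of $F_1$ is exactly the paper's: one recognises $(-\Delta)^{-1/2}\nabla^\perp\nabla\cdot\,\Xi$ as $\partial_j\calR_k$ acting on quadratic combinations of $\nabla d$ (cf.\ \eqref{eqn: F(d)}), so only one derivative lands on the kernel and the $(t-s)^{-1/(2a)}$ singularity is integrable precisely because $a>\tfrac12$.

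There is, however, a slip in your treatment of the director transport term. You rewrite $u\cdot\nabla d=\nabla\cdot(u\otimes d)$ and bound $u\otimes d$ in $L^{p_1}_{t,x}$. To feed this into the bootstrap for $\nabla d$ you must take a spatial gradient of the Duhamel formula, so the relevant contribution becomes $\int_0^t \nabla\nabla\cdot\, e^{(t-s)\Delta}(u\otimes d)\,ds$, with \emph{two} derivatives on the heat semigroup. That carries a $(t-s)^{-1}$ time kernel, for which the ``one derivative, $(t-s)^{-1/2}$, then HLS in time'' accounting you quote fails at the endpoint. The paper sidesteps this by \emph{not} integrating by parts here: it keeps $G=|\nabla d|^2d-u\cdot\nabla d\in L^{\frac12\min(p_i,q_i)}_{t,x}$ and bounds $|\nabla d|\lesssim |P_{1,t}\nabla d_0|+I_1(|G|)$, which yields the recursion \eqref{Eqn: q_i}. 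Your route can be salvaged either by reverting to the product $|u||\nabla d|$ at this step, or by invoking heat maximal $L^p$-regularity so that $\nabla^2(\partial_t-\Delta)^{-1}(u\otimes d)\in L^{p_1}_{t,x}$; but as written the derivative count is off by one.
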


We now prove that this leads to zero order regularity, i.e. H\"older regularity. 

\begin{theorem}\label{Prop: Reg2-intro} 
Assume that mild solution of \eqref{eqn: SQGHMF} satisfies  $\theta_0\in L^{\frac{2}{2a-1}}(\bbR^2), ~\nabla d_0\in L^2(\bbR^2),~ \theta\in L^2\cap L^{p_0}((0,T)\times \bbR^2)$ for some $p_0>\frac{2a+2}{2a-1}$, and $|\nabla d| \in L^2\cap L^{q_0}((0,T)\times \bbR^2)$ for some $q_0>4$,
 then we can conclude that $\theta~,u\in C^\alpha_{loc}((0,T)\times\bbR^2)$ for some $\alpha>0$.
\end{theorem}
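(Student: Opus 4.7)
From Theorem \ref{Prop: Reg1-intro} we may assume $\theta, u, |\nabla d| \in L^p((0,T)\times\bbR^2)$ for every $p \in [2,\infty)$, so the stress tensor $\Xi = \nabla d\odot\nabla d - \tfrac12|\nabla d|^2\Id_2$ lies in $L^p_{t,x}$ for every finite $p$. My first move would be to unfold the forcing into divergence form. Writing $\calR_k = (-\Delta)^{-1/2}\partial_k$ for the $k$-th Riesz transform and expanding the scalar $\nabla^\perp\nabla\cdot\Xi = -\partial_2\partial_l\Xi_{1l}+\partial_1\partial_l\Xi_{2l}$ in coordinates, one obtains
\begin{equation*}
(-\Delta)^{-\tfrac12}\nabla^\perp\nabla\cdot\Xi \;=\; \partial_1(\calR_l\Xi_{2l}) - \partial_2(\calR_l\Xi_{1l}) \;=:\; \nabla\cdot G,
\end{equation*}
with $G \in L^p_{t,x}$ for every finite $p$ by the $L^p$-boundedness of Riesz transforms on $\bbR^2$. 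Combined with the incompressibility $\nabla\cdot u=0$, this rewrites the $\theta$-equation of \eqref{eqn: SQGHMF} in pure divergence form
\begin{equation*}
\pt\theta + \FL{a}\theta \;=\; \nabla\cdot(G - u\theta) \;=:\; \nabla\cdot H,
\end{equation*}
where $H \in L^p_{t,x}$ for every finite $p$ by H\"older's inequality applied to the product $u\theta$.

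With this reformulation in hand, I would then apply a De Giorgi-type H\"older regularity scheme for dissipative drift-diffusion equations in the subcritical regime $a>\tfrac12$, in the spirit of Caffarelli-Vasseur \cite{CaffaVasseur} and the later refinements of Silvestre \cite{SilvestreAdv,SilvestrePisa,SilvestreIndiana}. The first De Giorgi lemma should yield $\theta \in L^\infty_{loc}$: one tests the equation against $(\theta-k)_+\varphi^2$ for a space-time cutoff $\varphi$ and a truncation level $k$, applies the Stroock-Varopoulos inequality for $\FL{a}$ to extract a coercive $\dot H^a$-seminorm of $(\theta-k)_+\varphi$, symmetrizes the drift term using $\nabla\cdot u=0$, and pairs $\nabla\cdot H$ against the test function via integration by parts followed by H\"older's inequality; since $H \in L^p$ for arbitrarily large $p$, the forcing lies strictly below the critical scaling of the fractional Sobolev embedding $\dot H^a \hookrightarrow L^{2/(1-a)}(\bbR^2)$, and the iteration over levels $k$ converges. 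A second De Giorgi lemma (oscillation decay), handled via the standard splitting of $\FL{a}$ into a local energy part and a nonlocal tail controlled by $\theta \in L^p_{t,x}$, then produces $\theta \in C^\alpha_{loc}((0,T)\times\bbR^2)$ for some $\alpha\in(0,1)$.

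Once $\theta \in L^p \cap C^\alpha_{loc}$, the H\"older regularity of $u = \calR^\perp\theta$ follows by a standard cutoff decomposition around any interior point $x_0$: the Riesz transform of a compactly supported $C^\alpha$ piece $\chi\theta$ preserves $C^\alpha$ regularity by classical kernel estimates, while $\calR^\perp((1-\chi)\theta)$ is smooth near $x_0$ thanks to the smoothness of the Riesz kernel off the origin combined with $\theta \in L^p_x$.

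The main obstacle will be executing the De Giorgi step cleanly in this coupled, nonlocal setting. The drift $u = \calR^\perp\theta$ is generated by $\theta$ itself through a singular integral and is known only in $L^p$ (not $L^\infty$) a priori, so its contribution to the truncated energy identity must be bootstrapped with care; and the nonlocal character of $\FL{a}$ produces tail terms in each truncation that have to be absorbed using the high integrability furnished by Theorem \ref{Prop: Reg1-intro}, all while preserving the antisymmetric structure afforded by $\nabla\cdot u=0$ which is what makes the drift tractable in the first place.
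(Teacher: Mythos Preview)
Your approach is sound in outline but follows a genuinely different route from the paper's. Where you invoke De Giorgi--Caffarelli--Vasseur energy methods for fractional drift-diffusion equations, the paper instead works directly with the Duhamel representation $\theta = P_{a,t}\theta_0 + J_1 + J_2$ afforded by the \emph{mild} solution, and proves membership of each piece in a fractional parabolic Campanato space $\Campanato{1,\lambda}{Q_r}$ (identified with $C^\alpha_\delta$ via the appendix lemma). Concretely, the paper estimates the mean oscillation of the potential-type integrals $J_1 = \int\nabla p_a\cdot(u\theta)$ and $J_2 = \int Lp_a : A(\nabla d)$ by splitting into inner/outer regions relative to the parabolic metric $\delta$, exploiting the kernel bounds \eqref{Est: HeatKernel} and the high integrability from Theorem~\ref{Prop: Reg1-intro}; the H\"older exponent emerges as $2a-1-\tfrac{2a+2}{p}$ for $p$ taken large.

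What each approach buys: the Campanato/potential route stays entirely within the mild-solution framework that defines the problem, is self-contained (no black-box regularity theorems), and dovetails naturally with the later bootstrap to higher smoothness via second-order finite differences of the same Duhamel formula. Your De Giorgi route is more conceptual and leverages existing machinery, but carries a hidden cost you have not addressed: the energy/testing arguments you describe presuppose a \emph{weak} formulation, whereas the hypothesis only furnishes a mild solution in the sense of Definition~\ref{def-mild}. You would need to verify that a mild solution with the stated integrability is also a distributional solution admissible for the truncation-testing procedure; this is routine but not automatic, especially given the nonlocal diffusion and the singular drift $u=\calR^\perp\theta$. Also note that your absorption of $u\cdot\nabla\theta$ into $\nabla\cdot H$ followed by a separate discussion of ``symmetrizing the drift'' is slightly redundant---pick one treatment.
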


A standard bootstrap argument then leads to the smoothness of mild solutions and our main theorem.  

\smallskip

It is worth mentioning that in the case of arbitrary Dirichlet type boundary condition, one can only hope to obtain $C^\alpha_{t,x}$ instead of $C^\infty_{t,x}$ smoothness, because fractional Laplacian allows jumps of $\partial_t\theta, \theta$ even in the simplest problem of fractional heat equation with Dirichlet data. We work on improved regularity such as $\nabla\theta\in C^\alpha_{x,t}$ only for the whole space $\bbR^2$ but would like to mention that via a by-now quite standard argument, one can get interior estimates for the boundary value problem. 

\medskip

The systems we investigate in this paper share many similarities with other systems present in the literature. In particular, our techniques allow to prove as well some smoothness for such systems. We would like to emphasize first that in the two-dimensional case the heat flow of harmonic maps is actually the main contributor to the formation of singularities. Of course higher dimensional versions will see the macro-equation (of fluids) strongly compete with the harmonic map part. The singularity of the forcing term will also play a significant role for the development of singularities in any dimension.  
As we already mentioned, the PDEs we introduce here are reminiscent of several variations of fluid equations, which amount to consider the multiplier $(-\Delta)^a$ as the diffusion term. Similarly, as for the structure of the coupling term for instance, our model is also related to the so-called Harmonic-Ricci flow (see e.g. \cite{MR3653093,MR2961788}) which can be seen as a variation of the Ricci-De Turck flow. Assume $(M,g)$ is a smooth closed oriented surface and let $g=g(t)$ be a smooth evolving metric on $M$ and consider a map $\phi:~M\to \mathbb S^2$.  Buzano's thesis \cite{BuzanoThesis} considered 
\begin{equation}\label{HRF}
\left\{
\begin{aligned}
&g_t=-2{\rm Ric}_g+2\alpha \nabla\phi\otimes\nabla\phi,\\
&\phi_t= \Delta_{g(t)} \phi + |\nabla_{g(t)} \phi |^2 \phi.\\
\end{aligned}
\right.
\end{equation}
In particular, considering as unknown the Gauss curvature $K$ instead of the metric itself, and owing to classical differential formulas on surfaces, 
the previous system writes (in its volume-preserving formulation) 
\begin{equation}\label{eqn-Gauss}
\left\{
\begin{aligned}
&~K_t=\Delta_g K+K^2+\alpha \nabla^\perp \nabla\cdot\left(\nabla \phi\otimes \nabla \phi-\frac12 |\nabla \phi|^2 \mathbb I_2\right),\\
&~\phi_t=\Delta_g \phi+|\nabla_g \phi|^2\phi . 
\end{aligned}
\right.
\end{equation}
This resembles the vorticity formulation in the nematic liquid crystal flow \eqref{oLCF} (see next section for more details). Then one can proceed using the conformal change to recover the original quantities
$$
g=e^{2u} g_0,\quad K_g=e^{-2u}(K_{g_0}-\Delta_{g_0} u).
$$

In \cite{MR3653093}, Buzano and Rupflin introduced a volume-preserving version of \eqref{HRF} (much related to \eqref{eqn-Gauss} ), hinting towards a dichotomy blow-up/global existence depending on the strength of the coupling constant $\alpha$. See also related Teichm\"uller theory of harmonic maps for a similar system \cite{wolf1989teichmuller,tromba2012teichmuller,rupflin2016flowing,rupflin2018teichmuller}.
\smallskip

Finally, we would like to mention a last system sharing also similar features. In the theory of (anti-)ferromagnets with easy plane anisotropy the equations of the spin density $m=(m_1,m_2,m_3):\mathbb R^2 \to \mathbb S^2$ can be formulated as an infinite dimensional Hamiltonian system with Hamilton function 
$$
E(m)=\int_{\mathbb R^2} |\nabla m|^2+ m^2_3
$$ 
and formal symplectic structure 
$$
\Omega=m \cdot \partial_1 m \wedge \partial_2 m. 
$$
The form $\Omega$ is the topological vorticity and introducing the stress tensor 
$$
\tilde \Xi_{ij}:= \frac{|\nabla m|^2+m_3^2}{2}\delta_{ij}-\partial_i m \cdot \partial_j m
$$
for $1 \leq i,j \leq 2$, the conservation law for the topological vorticity becomes
$$
\partial_t \Omega = - \nabla^\perp \nabla \cdot \tilde \Xi. 
$$
These systems have been investigated by Komineas and Papanicolaou for instance in \cite{MR1420808} (see also \cite{Lin-Shatah, Hang-Lin} and references therein). A feature of this system is that one can penalize the constraint of being in the sphere (in view of the term $m_3$ in the energy ) by forcing the spin density to take values in the hemisphere $\mathbb S^2_+$. It is then natural in view of the previous discussion, and the derivations in Section 2, to consider actually the following system
\begin{equation}\label{ferromag_eps} \ \ \
\begin{cases}
\partial_t \Omega + u\cdot \nabla \Omega +(-\Delta)^{a} \Omega = (-\Delta)^{-\frac12}\nabla^\perp \nabla \cdot \,\tilde \Xi ~&\mbox{ in }~(0,T) \times \mathbb{R}^2,\\
\nabla\cdot u =0~&\mbox{ in }~(0,T) \times \mathbb{R}^2,\\
\partial_t d+u\cdot\nabla d=\Delta d +(|\nabla d|^2 +\frac{d_3^2}{\varepsilon^2})d-\frac{d_3}{\varepsilon^2}e_3  ~&\mbox{ in }~(0,T) \times \mathbb{R}^2,\\
\end{cases}
\end{equation}

\medskip

In the papers \cite{Lin-Shatah,Hang-Lin} the analysis of the ferromagnet system alone (without the coupling as above) has been investigated. However, as far as we know, the previous system has not been introduced in the literature. The techniques we investigate in the present work can lead to regularity as well. 

The paper is organized as follows: in Section 2, we justify the PDE system in consideration by deriving first the original liquid crystal flow in vorticity formulation in two dimensions and then phenomenologically introducing the dissipative SQG. The nonlinear term is then built such that the {\it basic energy law} \eqref{basicenergy} holds. Section 3 is devoted to the proofs of our main results.  We finally collect in the appendix several crucial lemmata on potential-theoretic aspects of Morrey spaces in the parabolic setting. 

\medskip

\section{Derivation of the system}

This section is devoted to some details about the derivation of our new models, and some motivation behind it. The starting point is the liquid crystal flow system on $\bbR^2$ introduced by Lin \cite{L1989CPAM}, namely \eqref{oLCF}. More precisely, we write it in vorticity formulation by taking the $\curl$ of the first equation. Introducing the vorticity as the {\sl scalar} $\omega=\curl u=-\partial_{x_2} u_1+\partial_{x_1} u_2$, a standard computation gives

\begin{equation}\label{vor_NS}
\omega_t+u\cdot\nabla\omega-\Delta\omega=-\curl\div(\nabla d\odot\nabla d -\frac12|\nabla d|^2\Id_2)
\end{equation}

Denoting $ \Xi$ the stress-energy tensor 
$$
\Xi :=\nabla d\odot\nabla d -\frac12|\nabla d|^2\Id_2,
$$
Equation \eqref{vor_NS} is reminiscent of the following formulation of liquid crystals
$$
\frac{\partial \Omega}{\partial t}= -\curl\div \, \Xi,
$$
where $\Omega$ is the so-called {\it topological vorticity}. Postulating now that the velocity field $u$ whose curl is the vorticity $\omega$ depends in a nonlocal fashion w.r.t. the temperature $\theta$ as
\begin{equation*}
u:=\nabla^{\perp}(-\Delta)^{-1+\alpha}\theta, 
\end{equation*}
with $\alpha \in (0,1)$, the left hand side of \eqref{vor_NS} is chosen to be  
$$
\partial_t\theta + u\cdot\nabla\theta + (-\Delta)^a \theta 
$$
where $a \in (0,1)$ is another real parameter. We then obtain a general system of the form \eqref{HMF-SQG}. 
\begin{remark}
In the next section, one also could derive a model coupling hypo/hyper-dissipative versions of Navier-Stokes with the heat flow of maps. This amounts to consider  
$$
\partial_t v + v\cdot \nabla v +\nabla P = -\nu (-\Delta)^a v 
$$
for either $a<1$ or $a>1$. This model and its singularity formation has been investigated  in the literature (see e.g. \cite{MR4057903,MR4057903} and references therein).  
\end{remark}

\subsection{The forcing term}

We now derive the structure of the forcing term $F(d,D^\beta d)$. An important feature of the PDE \eqref{oLCF} is the dissipation of the formal energy
$$
E(t):=\int_{\mathbb R^2} |u|^2 +|\nabla d|^2. 
$$ 
Namely, the coupling is chosen such that the following dissipation holds for \eqref{oLCF}
\begin{equation*}
\frac12 \frac{d}{dt}\left(\int |\nabla d|^2+|u|^2\right)= -\int \big|\Delta d +|\nabla d|^2d\big|^2-\int \big| \nabla u\big|^2 \leq 0 .
\end{equation*}
This is the crucial {\it basic energy law}.  It relies on designing the forcing term such that cancellations occur whenever the PDE is tested with the stress tensor of the orientation $\Delta d+|\nabla d|^2 d$.  Testing \eqref{HMF-SQG}$_3$ with $\Delta d +|\nabla d|^2d$ we obtain
\begin{equation}\label{eqnheat}
\begin{aligned}
-\frac12 \frac{d}{dt}\int |\nabla d|^2+\int (u\cdot \nabla d)\Delta d=\int \big|\Delta d +|\nabla d|^2d\big|^2,
\end{aligned}
\end{equation}
where we have used
\begin{equation*}
\begin{aligned}
\int (u\cdot \nabla d)|\nabla d|^2d=\int u_i\partial_i d_j d_j |\nabla d|^2=\int u\cdot \nabla\left(\frac{|d|^2}{2}\right)|\nabla d|^2=0.
\end{aligned}
\end{equation*}

Now, we consider two types of energy dissipation:  
$$
\mathcal E_1(t):=\int_{\mathbb R^2} | \theta|^2 +|\nabla d|^2. 
$$ 

and

$$
\mathcal E_2(t):=\int_{\mathbb R^2} |(-\Delta)^{(\alpha-1)/2} \theta|^2 +|\nabla d|^2. 
$$ 

The previous two quantities $\mathcal E_1$ and $\mathcal E_2$ control two different norms of $\theta$ and lead to two different models. 

\subsection{Ad hoc singular forcing}
Testing  \eqref{HMF-SQG}$_1$  by $\theta$, using that $u$ is divergence free,  we get
\begin{equation}\label{eqn1}
\begin{aligned}
\frac12\frac{d}{dt}\int \big | \theta \big |^2=-\int \big | (-\Delta)^{\frac{ \alpha -1 +a}{2}}\theta \big|^2+\int F(d,D^\beta d)\theta . 
\end{aligned}
\end{equation}
 From \eqref{eqn1} and \eqref{eqnheat}, we have
\begin{equation*}
\begin{aligned}
\frac12 \frac{d}{dt}\left(\int |\nabla d|^2+|\theta|^2\right)=&~-\int \big|\Delta d +|\nabla d|^2d\big|^2-\int \big|(-\Delta)^{\frac{ \alpha -1 +a}{2}}\theta\big|^2\\
&~+ \int F(d,D^\beta d)\theta +\int (u\cdot \nabla d)\Delta d. \\
= &~-\int \big|\Delta d +|\nabla d|^2d\big|^2-\int \big|(-\Delta)^{\frac{ \alpha -1 +a}{2}}\theta\big|^2 \leq 0
\end{aligned}
\end{equation*}
provided the following balance law holds 
\begin{equation}\label{basicenergy}
\int_{\RR^2} F_1(d,D^\beta d)\theta  + \int_{\RR^2} (u\cdot \nabla d) \Delta d = 0 .
\end{equation}

Using now the constitutive relation $u=\nabla^{\perp}(-\Delta)^{-1+\alpha}\theta$, we compute
\begin{equation*}
\begin{aligned}
\int (u\cdot \nabla d)\Delta d=&~\int u_1\partial_1 d_j \partial_{kk}d_j+ \int u_2\partial_2 d_j \partial_{kk} d_j\\
=&~\int \partial_1 d_j \partial_{kk}d_j\partial_2\big[(-\Delta)^{-1+\alpha}\theta\big]-\int \partial_2 d_j \partial_{kk} d_j\partial_1\big[(-\Delta)^{-1+\alpha}\theta\big]\\
=&~-\int (-\Delta)^{-1+\alpha} \partial_2\big[\partial_1 d_j \partial_{kk}d_j\big] \theta+\int (-\Delta)^{-1+\alpha} \partial_1\big[\partial_2 d_j \partial_{kk} d_j\big]\theta\\
=&~-\int ( -\Delta)^{-1+\alpha} \curl \div (\nabla d\odot \nabla d-\frac12|\nabla d|^2\Id_2) \theta\\
= &~ - \int F_1(d,D^\beta d)\theta
\end{aligned}
\end{equation*}
This gives rise to system \eqref{HMF-SQG} with 
$$
F_1(d,D^\beta d):= ( -\Delta)^{-1+\alpha} \curl \div \Xi .
$$
This system dissipates $\mathcal E_1$. 

\medskip

\subsection{Topological vorticity formulation }

Testing now  \eqref{HMF-SQG}$_1$  with the multiplier $(-\Delta)^{\alpha-1}\theta$, using that $u$ is an orthogonal gradient (and hence $\int  (u \cdot \nabla \theta )(-\Delta)^{\alpha-1}\theta=0$)  we get
\begin{equation}\label{eqn2} \ \ \
\begin{aligned}
\frac12\frac{d}{dt}\int \big | (-\Delta)^{\frac{ \alpha -1}{2}} \theta \big |^2=-\int \big | (-\Delta)^{\frac{ \alpha -1 +a}{2}}\theta \big|^2+\int F_2(d,D^\beta d)(-\Delta )^{\alpha-1}\theta . 
\end{aligned}
\end{equation}
 From \eqref{eqn2} and \eqref{eqnheat}, we have
\begin{equation*}
\begin{aligned}
\frac12 \frac{d}{dt}\left(\int |\nabla d|^2+|(-\Delta)^{\frac{\alpha-1}{2}}\theta|^2\right)=&~-\int \big|\Delta d +|\nabla d|^2d\big|^2-\int \big|(-\Delta)^{\frac{ \alpha -1 +a}{2}}\theta\big|^2\\
&~+ \int F_2(d,D^\beta d)(-\Delta )^{\alpha-1}\theta +\int (u\cdot \nabla d)\Delta d. \\
= &~-\int \big|\Delta d +|\nabla d|^2d\big|^2-\int \big|(-\Delta)^{\frac{ \alpha -1 +a}{2}}\theta\big|^2 \leq 0
\end{aligned}
\end{equation*}
provided the following balance law holds 
\begin{equation*}
\int_{\RR^2} F_2(d,D^\beta d)(-\Delta )^{\alpha-1}\theta  + \int_{\RR^2} (u\cdot \nabla d) \Delta d = 0 .
\end{equation*}

Using now the constitutive relation $u=\nabla^{\perp}(-\Delta)^{-1+\alpha}\theta$, we compute as before to get 
$$
F_2(d,D^\beta d)=\curl \div \, \Xi .
$$
We then have derived \eqref{HMF-SQG} with then previous forcing, dissipating the energy $\mathcal E_2$. 

\medskip

\section{Regularity of mild solutions}

This section is devoted to the proofs of the regularity results. We first collect the following estimates on the heat kernel of $\partial_t+(-\Delta)^a$. Let $p_a(t,x-y)$ to be the fundamental solution to fractional heat equation $\partial_t w + \FL{a} w= 0$ on $(0,\infty)\times\bbR^2$. It is well known that $p_a\in C^\infty((0,\infty)\times \bbR^2)$ and $p_a(t,x)=p_a(t,-x)$ for any $(t,x)\in(0,\infty)\times\bbR^2$. Moreover, we have the following scaling and semigroup properties:
\begin{align*}
p_a(t,x) & = t^{-\frac1a}p_a\rbracket{1, t^{-\frac{1}{2a}} x}, \ t>0, ~x\in\bbR^2 \\
p_a(t,x) & = \int_{z\in \bbR^2} p_a(t-s,x-z) p(s,z), \  t>s>0, ~x\in\bbR^2
\end{align*}

We also have the following pointwise estimate, see \cite{Jakubowski20}
\begin{equation}\label{Est: HeatKernel} \begin{split}
|\nabla^k p_a|(t,x), |\calR_i \nabla^k p_a|(t,x) & \lesssim \frac{1}{\delta((0,0),(t,x))^{2+k}}\\
|\partial_t p_a|(t,x),|\partial_t \calR_i p_a|(t,x) &\lesssim \frac1t \frac{1}{\delta((0,0),(t,x))^{2}} 
\end{split}\end{equation}
where $\delta((s,y),(t,x))=\max\rbracket{|s-t|^{\frac1{2a}},|x-y|}$.

We first define the notion of solutions we consider.

\smallskip

\begin{definition}\label{def-mild}
Let $a \in (0,1)$. 
Denote $p_a$ the fundamental solution of the parabolic  operator $\partial_t + \FL{a}$ on $\bbR^2$ and $P_{a,t}$ the (stable) semigroup operator
$$P_{a,t}f(x)=\int_{\bbR^2}p_a(t,x-y)f(y)dy, \ \ \  t>0, ~x\in\bbR^2 . $$
We call a mild solution to $\eqref{eqn: SQGHMF}$ a couple  $(\theta,d) \in L^1_{loc} ((0,T) \times \mathbb R^2)^2$, satisfying a.e. 
\begin{equation*} 
\theta(t,x)   = P_{a,t}\theta_0(x) +
\end{equation*}
\begin{equation*}
 \int_{s\in(0,t)} \int_{y\in\bbR^2}  p_a(t-s,x-y)\sbracket{ (-\Delta)^{-\frac12}\nabla^\perp \nabla \cdot \,\Xi- u\cdot\nabla\theta }(s,y),
\end{equation*}
whenever the integrals are converging and such that $u = \mathcal R^\perp \theta .$

\end{definition}

\subsection{Higher integrability: Proof of Theorem \ref{Prop: Reg1-intro}}

Recall that  by Duhamel principle, a mild solution to $\eqref{eqn: SQGHMF}_1$ can be written as
\begin{align*} 
\theta(t,x)  & = P_{a,t}\theta_0(x) + \int_{s\in(0,t)} \int_{y\in\bbR^2}  p_a(t-s,x-y)\sbracket{ (-\Delta)^{-\frac12}\nabla^\perp \nabla \cdot \,\Xi - u\cdot\nabla\theta }(s,y) \\
& = P_{a,t}\theta_0(x)  + J_1(t,x) + J_2(t,x). 
\end{align*}
 
It will be useful to work out some algebraic structure of the forcing term, namely
\begin{align}\label{eqn: F(d)} \ \ \ \ \ \ \ \
\begin{split}
-\curl\div \, \Xi &  =- \invHL(\partial_2 d_j \partial_{1kk}d_j - \partial_1 d_j \partial_{2kk}d_j)\\
& = -\partial_{1k} \invHL (\partial_2 d_j \partial_k d_j) + \partial_{2k} \invHL (\partial_1 d_j \partial_k d_j) .\\
\end{split} \end{align}

Moreover, if we denote
\[ \tau(d):=\partial_t d + u\cdot\nabla d = \Delta d + |\nabla d|^2d \]
to be the deviation of $d$ from being an harmonic map (i.e. is the stress-energy tensor of $d$), then
 we can calculate that
\begin{align*}\label{eqn: F(d)2}  
\begin{split}
-\curl\div \, \Xi & =- \partial_1\invHL(\partial_2 d\cdot \tau(d) ) + \partial_2 \invHL (\partial_1 d \cdot \tau(d)) \\
& = (\nabla \cdot \invHL)(\nabla^\perp d \cdot \tau(d) ),
\end{split} \end{align*}
which leads to the following lemma thanks to the standard boundedness in $L^p$ of the Riesz transform 

\begin{lemma}
The forcing term $F_1(d,D^\beta d)$ satisfies: for every $p \in (1,\infty)$, there exists $C>0$ such that 
$$
 \normp{F_1(d,D^\beta d)}{p} \leq C \normp{|\nabla d||\Delta d|}{p} 
 $$
\end{lemma}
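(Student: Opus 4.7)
The plan is to read the claim as a direct consequence of the identity already established just above the lemma, namely
\begin{equation*}
F_1(d,D^\beta d) \;=\; \pm \bigl[\partial_{1}\invHL(\partial_2 d\cdot \tau(d)) - \partial_2 \invHL (\partial_1 d\cdot \tau(d))\bigr] \;=\; (\nabla\cdot\invHL)\bigl(\nabla^\perp d \cdot \tau(d)\bigr),
\end{equation*}
combined with two facts: the sphere-valued constraint on $d$, and the $L^p$ boundedness of Riesz transforms.

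First I would replace $\tau(d)=\Delta d+|\nabla d|^2 d$ by $\Delta d$ inside every scalar product $\partial_k d\cdot \tau(d)$. This is the key algebraic observation: since $|d|\equiv 1$ on $\bbS^2$, differentiating gives $d\cdot\partial_k d=\frac12\partial_k |d|^2=0$ for $k=1,2$, so
\begin{equation*}
\partial_k d \cdot \tau(d) \;=\; \partial_k d \cdot \Delta d + |\nabla d|^2\,(d\cdot \partial_k d) \;=\; \partial_k d \cdot \Delta d.
\end{equation*}
Consequently, the identity above becomes
\begin{equation*}
F_1(d,D^\beta d) \;=\; \pm\bigl[\partial_1\invHL(\partial_2 d\cdot \Delta d) - \partial_2 \invHL(\partial_1 d\cdot \Delta d)\bigr].
\end{equation*}

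Second, I would invoke that each operator $\partial_j\invHL = \partial_j(-\Delta)^{-1/2}$ is (up to a sign) a component of the vectorial Riesz transform $\calR$, hence a Calder\'on--Zygmund singular integral bounded on $L^p(\bbR^2)$ for every $p\in(1,\infty)$. Applying this componentwise yields
\begin{equation*}
\normp{F_1(d,D^\beta d)}{p} \;\lesssim\; \sum_{k=1,2}\normp{\partial_k d\cdot \Delta d}{p} \;\leq\; C\,\normp{|\nabla d|\,|\Delta d|}{p},
\end{equation*}
by Cauchy--Schwarz in the target $\bbR^3$ applied pointwise to $\partial_k d\cdot \Delta d$, which is exactly the stated bound.

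There is no real obstacle here: the only subtle point is the sphere-valued trick that trades $\tau(d)$ for $\Delta d$ in the inner product, which is what prevents the extra $|\nabla d|^2 d$ contribution from forcing a cubic term $|\nabla d|^3$ on the right-hand side. The rest is a direct application of the $L^p$-theory of singular integrals. One should also note that the formal computation of the identity requires enough regularity on $d$ to integrate by parts, which will be justified a posteriori by the regularity class in which we work when applying the lemma.
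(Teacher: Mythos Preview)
Your proof is correct and follows essentially the same approach as the paper: the paper's entire justification is the identity $F_1=(\nabla\cdot\invHL)(\nabla^\perp d\cdot\tau(d))$ together with the line ``thanks to the standard boundedness in $L^p$ of the Riesz transform''. You have simply made explicit the step the paper leaves implicit, namely that the sphere constraint $d\cdot\partial_k d=0$ kills the $|\nabla d|^2 d$ contribution in $\partial_k d\cdot\tau(d)$, so that only $\partial_k d\cdot\Delta d$ survives and the bound reads $|\nabla d||\Delta d|$ rather than $|\nabla d||\tau(d)|$.
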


The terms $J_1$ and $J_2$ can be then further simplified. A straightforward integration by parts gives for the first term 

\begin{align*}
J_1(t,x) & = -\int_{s\in(0,t)} \int_{y\in\bbR^2}  p_a(t-s,x-y) (u\cdot\nabla\theta)(s,y) \\
& = -\int_{s\in(0,t)} \int_{y\in\bbR^2}  \nabla p_a(t-s,x-y)\cdot (u\theta)(s,y)  .
\end{align*}
The algebraic expression \eqref{eqn: F(d)} allows us to rewrite the second term as

\begin{align*}
J_2(t,x)  & = \int_{s\in(0,t)} \int_{y\in\bbR^2}  p_a(t-s,x-y)\curl\div \, \Xi (s,y) \\
& = \int_{s\in(0,t)} \int_{y\in\bbR^2} (\partial_{11}-\partial_{22})\invHL  p_a(t-s,x-y) (\partial_2 d_j\partial_1 d_j)(s,y)  \\
&- \int_{s\in(0,t)} \int_{y\in\bbR^2}\partial_{12}\invHL p_a(t-s,x-y) (\partial_1d_j\partial_1d_j - \partial_2d_j\partial_2d_j)(s,y) \\
&= \int_0^t \int_{\bbR^2} Lp_a(t-s,x-y): A(\nabla d)(s,y) \,ds\,dy
\end{align*}
with 
\[ Lp_a = \begin{bmatrix} \partial_{x_1} \calR_1 p_a & \partial_{x_2}\calR_1p_a \\ \partial_{x_1} \calR_2 p_a& \partial_{x_2} \calR_2p_a \end{bmatrix} \]
and 
\[ A(\nabla d) = \begin{bmatrix} \partial_{x_1}d_j \partial_{x_2} d_j & - \partial_{x_1} d_j \partial_{x_1} d_j \\  \partial_{x_2} d_j \partial_{x_2} d_j &  -\partial_{x_1}d_j \partial_{x_2} d_j   \end{bmatrix} \]

The term  $Lp_a$ is a first order pseudo-differential operator, in the form of partial derivative of Riesz transform in spatial variable, applied to the heat kernel and the matrix  $A(\nabla d)$ is quadratic in $\nabla d$.

We apply \eqref{Est: HeatKernel} to estimate that 
\begin{align*} 
J_1(t,x)  & \leq \int_{s\in(0,t)} \int_{y\in\bbR^2}  |\nabla p_a (t-s,x-y)|  |u\theta|(s,y) \\
& \lesssim  \int_{s\in(0,t)} \int_{y\in\bbR^2} \frac{|u\theta|(s,y)}{\delta((x,t),(y,s))^3}=\pRieszP{2a-1}{|u\theta|}(t,x),
\end{align*}

\begin{align*} 
J_2(t,x) & \lesssim   \int_{s\in(0,t)} \int_{y\in\bbR^2} |\calR\nabla p_a (t-s,x-y)|  |\nabla d(s,y)|^2 \\
& \lesssim \int_{s\in(0,t)} \int_{y\in \bbR^2} \frac{|\nabla d(s,y)|^2}{\delta((x,t),(y,s))^3} =\pRieszP{2a-1}{|\nabla d|^2}(t,x).
\end{align*}

The $P_t\theta_0$ term can be estimated by the following inequality in \cite{WuEJDE01}
$$ \normp{P_t f}{p} \lesssim t^{-\frac{2a-1}{2a}+\frac{1}{ap}}\normp{f}{\frac{2}{2a-1}}, p\in\sbracket{\frac{2}{2a-1}, \infty } $$
which gives
\[\normp{P_t\theta_0}{L^p_{t,x}} \lesssim \frac{T^{\frac{2a-1}{2a}+\frac1p(1-\frac1a)}}{(1-\frac1a+\frac{2a-1}{2a}p)^{\frac1p}} \normp{\theta_0}{L_x^{\frac{2a}{2a-1}}} \leq C(p,a,T) \normp{\theta_0}{L_x^{\frac{2a}{2a-1}}}, \]
and $C(p,a,T)$ stays bounded if we fix $a,T$ and only let $p\to\infty$.

We then would like to apply Lemma \ref{Lem: Riesz potential1} below. We have assumption $\theta\in L^2\cap L^{p_0}((0,T)\times\bbR^2)$. Suppose that we have shown that $\theta\in L^{p_i}((0,T)\times\bbR^2)$, property of Riesz transform implies that we also have $u\in L^2\cap L^{p_i}((0,T)\times\bbR^2)$ and hence $|\theta u|\in L^1\cap L^{p_i/2}((0,T)\times \bbR^2)$.
For term $J_1(t,x)$, we can take $\lambda = 2+2a$ and $\beta=2a-1$. If $p_i<\frac{4+4a}{2a-1}$ then  Lemma \ref{Lem: Riesz potential1} implies that $|J_1|\lesssim \pRieszP{2a-1}{|\theta u|} \in L^{p_{1,i+1}}((0,T)\times\bbR^2)$ with $p_{1,i+1}=\frac{(2+2a)p_i}{4+4a-(2a-1)p_i}$.  If instead $p_i\geq \frac{4+4a}{2a-1}$, this is better because we can use interpolation to pick some $p\in(2,p_i)$ with $p<\frac{4+4a}{2a-1}$, and then obtain   $|J_1|\lesssim \pRieszP{2a-1}{|\theta u|} \in L^{p_{1,i+1}((0,T)\times\bbR^2)}$ with $p_{1,i+1}=\frac{(2+2a)p}{4+4a-(2a-1)p}$. By taking $p\to\frac{4+4a}{2a-1}$ we can show that $p_{1,i+1}$ can be arbitrarily large. Overall, our assumption that $\theta\in L^{p_i}((0,T)\times\bbR^2)$ with $p_i>\frac{2a+2}{2a-1}$ implies that $J_1\in L^{p_{1,i+1}}((0,T)\times\bbR^2)$ has improved regularity  $p_{1,i+1}>p_i$.

The regularity of $J_2$ relies on the regularity of $|\nabla d|^2$. We have assumption $\nabla d\in L^{q_0}((0,T)\times\bbR^2)$ with $q_0>4$. Suppose that we have $\nabla d\in L^{q_i}((0,T)\times\bbR^2)$, then we take $\lambda=4$ and $\beta=1$ in Lemma \ref{Lem: Riesz potential1}. Similar to argument in previous paragraph, if $q_i<\frac{4a+4}{2a-1}$, we  conclude that $|J_2|\lesssim \pRieszP{2a-1}{|\nabla d|^2}\in L^{p_{2,i+1}}((0,T)\times\bbR^2)$ with $p_{2,i+1}=\frac{(2+2a)q_i}{4+4a-(2a-1)q_i}$. If $q_i\geq\frac{4a+4}{2a-1}$ we conclude that $J_2\in L^{p_{2,i+1}}((0,T)\times\bbR^2)$.

In summary, from $\theta,u\in L^{p_i}((0,T)\times\bbR^2)$ and  $\nabla d\in L^{q_i}((0,T)\times\bbR^2)$ we can obtain $\theta \in L^{p_{i+1}}((0,T)\times\bbR^2)$ where 
\begin{equation}\label{Eqn: p_i} p_{i+1} =
\begin{cases} \frac{(2+2a)\min(p_i,q_i)}{4+4a-(2a-1)\min(p_i,q_i)} & \text{ if } \min(p_i,q_i)<\frac{4a+4}{2a-1}, \\
  \text{  arbitrarily large } & \text{ if }  \min(p_i,q_i)\geq \frac{4a+4}{2a-1},
\end{cases} \end{equation}
and $p_{i+1}>\min(p_i,q_i)$ if $\min(p_i,q_i)>\frac{2a+2}{2a-1}$.

 We similarly use Duhamel principle to write mild solution formula for $\eqref{eqn: SQGHMF}_3$ 
as 
$$ d(t,x) = P_{1,t}d_0(x) + \int_{s\in(0,t)}\int_{y\in\bbR^2} p_1(t-s,x-y) G(s,y), $$
where
$$ G = |\nabla d|^2 d - u\cdot \nabla d \ \  \in L^1\cap L^{\frac12\min(p_i,q_i)}((0,T)\times\bbR^2).$$
As a consequence, 
\begin{align*}
 |\nabla d(t,x)| & \leq  |P_{1,t}\nabla d_0(x)| + \int_{s\in(0,t)}\int_{y\in\bbR^2} |\nabla p_1(t-s,x-y)| |G(s,y)| \\
 & \leq  |P_{1,t}\nabla d_0(x)| + \int_{s\in(0,t)}\int_{y\in\bbR^2} \frac{ |G(s,y)|}{\max(|x-y|,|t-s|^{\frac12})^3}.
  \end{align*}
We take $\beta=1,\lambda=4$ in Lemma \ref{Lem: Riesz potential1} and conclude that $\nabla d(t,x)\in L^{q_{i+1}}((0,T)\times\bbR^2)$ where 
\begin{equation}\label{Eqn: q_i} q_{i+1}=\begin{cases} 
\frac{4\min(p_i,q_i)}{8-\min(p_i,q_i)} & \text{ if } \min(p_i,q_i)<8, \\
\text{arbitrarily large} &  \text{ if } \min(p_i,q_i)\geq 8.
\end{cases}\end{equation}
Similarly, we can see directly that $q_{i+1}>\min(p_i,q_i)$ if $\min(p_i,q_i)>4$.

Now we apply bootstrap technique to tuple $(p_i,q_i)$. Starting from $(p_i,q_i)$ with $p_i>\frac{2a+2}{2a-1}>4$ and $q_i>4$, we first fix $p_{i+1}=p_i$ unchanged, and merely iterate over \eqref{Eqn: q_i} to finally improve $q_{i+1}$ to magnitude $q_{i+1}= p_{i+1}= p_i>\frac{2a+2}{2a-1}$, then we have $\min(p_{i+1},q_{i+1})>\frac{2a+2}{2a-1}$. Second, we fix $q_{i+2}=q_{i+1}$ unchanged and apply \eqref{Eqn: p_i} to improve $p_{i+2}$ to magnitude $p_{i+2}>\min(p_{i+1},q_{i+1})=p_{i+1}=p_i$. Third, we fix $p_{i+3}=p_{i+2}>p_{i+1}$ unchanged and apply \eqref{Eqn: q_i} to  improve $q_{i+3}>\min(p_{i+2},q_{i+2})\geq q_{i+1}$. Therefore, we have both $p_{i+3}>p_{i+1}$ and $q_{i+3}>q_{i+1}$, and we iterate over above argument to conclude that $\theta,u,\nabla d\in L^p((0,T)\times\bbR^2)$ for arbitrarily large $p$.

\subsection{H\"older regularity: proof of Theorem \ref{Prop: Reg2-intro}}

First,  Theorem \ref{Prop: Reg1-intro} states that $\theta,u,\nabla d\in L^p((0,T)\times\bbR^2)$ for arbitrarily large $p\in[2,\infty)$. To prove H\"older regularity, we can see in this proof that we need $|\theta u|,|\nabla d|^2 \in L^p((0,T)\times\bbR^2)$ for $p>\frac{2a+2}{2a-1}$. We set
$$ 
\theta(z_1) =P_{t_1}\theta_0(x_1) + J_1(t_1,x_1)+ J_2(t_1,x_1),
$$
where
\begin{align*}
J_1(t,x) &= \int_0^t \int_{\bbR^2} \nabla p_a(t-s,x-y) \cdot (\theta u)(s,y) dsdy, \\
J_2(t,x) &= \int_0^t \int_{\bbR^2} Lp_a(t-s,x-y): A(\nabla d)(s,y) dsdy
\end{align*}
with the same notations as above. 
 Because $P_t\theta_0(x)$ denotes the solution of fractional heat equation $\partial_t\theta + \FL{a}\theta=0$ with initial condition $\theta(0,x)=\theta_0(x)$, then one can show that $P_t\theta_0$ is smooth on $(0,T)\times\bbR^2$. Indeed, for any $b_0,b_1,b_2\in\bbN$,
\begin{align*}
 |\partial_t^{b_0} \partial_{x_1}^{b_1} \partial_{x_2}^{b_2} P_t\theta_0|(t,x) \leq &~ \int_{\bbR^2} |\partial_t^{b_0} \partial_{x_1}^{b_1} \partial_{x_2}^{b_2} p(t,x-y) | |\theta_0(y)| dy \\ 
\lesssim &~ t^{-(b_0+\frac{2+b_1+b_2}{2a})} \normp{\theta_0}{L^1(\bbR^2)}.
\end{align*}

To show the H\"older regularity for $J_1$ and $J_2$ term, we show that they are in some fractional parabolic Campanato space $\Campanato{1,\lambda}{Q_r(t_0,x_0)}$. Formally, if one treat formally $\nabla p(z)$ and $Lp(z)$ as $\delta(0,z)^{-3}$, then formally $J_1,J_2$ are $\pRieszP{2a-1}{|u\theta|},\pRieszP{2a-1}{|\nabla d|^2}$, and also Lemma \ref{Lem: Riesz potential2} would imply H\"older continuity with exponent $$2a-1-\frac{2+2a}{p}=\frac{2a+2}{p'}-3.$$ Here we will follow similar approach but the estimation is more delicate in that $\nabla p(z)$ and $Lp(z)$ differ from $\delta(z)^{-3}$ in two ways: different finite support in time and different finite difference estimate \eqref{Eqn: Tech1}.

To estimate $J_1$, observe that 
\begin{align*}
 &~\dashint_{(t_1,x_1)\in Q_r(t_0,x_0)}\abs{J_1(t_1,x_1) - (J_1)_{Q_r(t_0,x_0)}} \\
\leq&~ \dashint_{(t_1,x_1)\in Q_r(t_0,x_0)} \dashint_{(t_2,x_2)\in Q_r(t_0,x_0)} \abs{J_1(t_1,x_1)-J_1(t_2,x_2)} \\
= &~  \dashint_{\substack{(t_1,x_1)\in \\ Q_r(t_0,x_0)}} \dashint_{\substack{(t_2,x_2)\in \\ Q_r(t_0,x_0)}} \bigg|\int_{\substack{s\in(0,t_1) \\ y\in\bbR^2}} \theta u(s,y) \cdot \nabla p(t_1-s,x_1-y)  \\
&~\qquad\qquad\qquad\qquad\quad- \int_{\substack{s\in(0,t_2) \\ y\in\bbR^2}} \theta u(s,y) \cdot \nabla p(t_2-s,x_2-y) \bigg| \\
 \lesssim &~ J_{11} + J_{12} + J_{13},
\end{align*}
where 
\begin{align*} 
J_{11} &= \dashint_{\substack{(t_1,x_1)\in \\ Q_r(t_0,x_0)}} \dashint_{\substack{(t_2,x_2)\in \\ Q_r(t_0,x_0) }} \II\{t_2<t_1\} \int_{\substack{s\in(t_2,t_1) \\ y\in\bbR^2}}  \abs{ \theta u(s,y)} \abs{ \nabla p(t_1-s,x_1-y) } \\
J_{12} & =  \dashint_{\substack{(t_1,x_1)\in \\ Q_r(t_0,x_0)}} \dashint_{\substack{(t_2,x_2)\in \\ Q_r(t_0,x_0) }}  \int_{\substack{s\in(0,t_2) \\ y\in\bbR^2}} \abs{ \theta u(s,y)} \abs{ \nabla p(t_1-s,x_1-y) - \nabla p(t_1-s,x_2-y) } \\
J_{13} & = \dashint_{\substack{(t_1,x_1)\in \\ Q_r(t_0,x_0)}} \dashint_{\substack{(t_2,x_2)\in \\ Q_r(t_0,x_0) }} \II\{t_2<t_1\}  \int_{\substack{s\in(0,t_2) \\ y\in\bbR^2}} \abs{ \theta u(s,y)} \\
&~\qquad\qquad\qquad\qquad\qquad\qquad\qquad\qquad\abs{ \nabla p(t_1-s,x_1-y) - \nabla p(t_2-s,x_1-y) },
\end{align*}
and for $p>\frac{2+2a}{2a-1}$, i.e. $p'\in(1,\frac{2a+2}{3})$, can estimate $J_{11},J_{12},J_{13}$ in the following way. Notice that we will use the trick of separating integral into spatial ``inner" and ``outer" parts in Lemma \ref{Lem: Riesz potential2}. For  inner part we use $\abs{\nabla p(z)}\lesssim \delta(0,z)^{-3}$. For outer part, for term $J_{11}$  we first integrate over spatial variable $y$ and then integrate over time variable $s$. For term $J_{12}$  which has finite difference of $\nabla p$ in spatial variable $x$, we introduce a dummy variable $h\in[0,1]$ and write the finite difference in terms of  $\sup_{h\in[0,1]}\frac{d\nabla p}{d h}$ and use \eqref{Est: HeatKernel} to control derivatives. Details are as follows:
\begin{align*}
J_{11}  \lesssim & \sup_{\substack{(t_1,x_1)\in \\ Q_r(t_0,x_0)}} \int_{\substack{\Delta t := t_1-t_2 \\ \in (0,2r^{2a})}}  \rbracket{\int_{\substack{s\in(0,\Delta t) \\ |y|\leq |\Delta t|^{\frac1{2a}}}} + \int_{\substack{s\in(0,\Delta t) \\ |y|> |\Delta t|^{\frac1{2a}}}}} \abs{\theta u (t_1-s,x_1-y)} \abs{\nabla p(s,y)} \\
 \lesssim & \sup_{\substack{(t_1,x_1)\in \\ Q_r(t_0,x_0)}} \int_{\substack{\Delta t := t_1-t_2 \\ \in (0,2r^{2a})}} \normp{\theta u}{L^p\rbracket{Q_{|\Delta t|^{1/2a}}(t_1,x_1)}} \normp{\delta(0,\cdot)^{-3}}{L^{p'}\rbracket{Q_{|\Delta t|^{1/2a}}(t_1,x_1)}} \\
& +  \sup_{\substack{(t_1,x_1)\in \\ Q_r(t_0,x_0)}} \int_{\substack{\Delta t := t_1-t_2 \\ \in (0,2r^{2a})}} \int_{s\in(0,\Delta t)} \normp{\theta u(t_1-s,\cdot)}{L^p(\bbR^2)} \normp{|y|^{-3}}{|y|> |\Delta t|^{1/2a}} \\
\lesssim &  \sup_{\substack{(t_1,x_1)\in \\ Q_r(t_0,x_0)}} \int_{\substack{\Delta t := t_1-t_2 \\ \in (0,2r^{2a})}} \normp{\theta u}{L^p\rbracket{Q_{|\Delta t|^{1/2a}}(t_1,x_1)}} |\Delta t|^{\frac{1}{2a}(\frac{2a+2}{p'}-3)} \\
& +  \sup_{\substack{(t_1,x_1)\in \\ Q_r(t_0,x_0)}} \int_{\substack{\Delta t := t_1-t_2 \\ \in (0,2r^{2a})}} \int_{s\in(0,\Delta t)}  \normp{\theta u(t_1-s,\cdot)}{L^p(\bbR^2)} |\Delta t|^{\frac{1}{2a}(\frac{2}{p'}-3)} \\
\lesssim & \int_{\Delta t \in (0,2r^{2a})} \normp{\theta u}{L^p((0,T)\times \bbR^2)} |\Delta t|^{\frac1{2a}(\frac{2a+2}{p'}-3)} \lesssim \normp{\theta u}{L^p((0,T)\times \bbR^2)} r^{\frac{2a+2}{p'}-3},
\end{align*}

\begin{align*}
J_{12} 
\lesssim &\sup_{\substack{(t_1,x_1)\in \\ Q_r(t_0,x_0)}} \dashint_{\substack{\Delta x := x_2-x_1 \\ \in B(0,2r)}}   \int_{\substack{\delta(0,(s,y))\\ \leq 2|\Delta x|}}  \abs{\theta u(t_1-s,x_1-y)} \abs{\nabla p(s,y) -\nabla p(s,y+\Delta x)} \\
&+\sup_{\substack{(t_1,x_1)\in \\ Q_r(t_0,x_0)}} \dashint_{\substack{\Delta x := x_2-x_1 \\ \in B(0,2r)}}   \int_{\substack{\delta(0,(s,y))\\>2|\Delta x|}} \abs{\theta u(t_1-s,x_1-y)} \abs{\nabla p(s,y) -\nabla p(s,y+\Delta x)} \\
\lesssim &\sup_{\substack{(t_1,x_1)\in \\ Q_r(t_0,x_0)}} \dashint_{\substack{\Delta x := x_2-x_1 \\ \in B(0,2r)}}   \int_{\substack{\delta(0,(s,y))\\ \leq 3|\Delta x|}}  \abs{\theta u(t_1-s,x_1-y)} \abs{\nabla p(s,y) } \\
&+\sup_{\substack{(t_1,x_1)\in \\ Q_r(t_0,x_0)}} \dashint_{\substack{\Delta x := x_2-x_1 \\ \in B(0,2r)}}   \int_{\substack{\delta(0,(s,y))\\>2|\Delta x|}} \abs{\theta u(t_1-s,x_1-y)} \abs{\sup_{h\in[0,1]}\frac{d}{d h}\nabla p(s,y+h\Delta x)} \\
\lesssim &\sup_{\substack{(t_1,x_1)\in \\ Q_r(t_0,x_0)}} \dashint_{\substack{\Delta x := x_2-x_1 \\ \in B(0,2r)}}  
\normp{\theta u}{L^p\rbracket{Q_{3|\Delta x|}(t_1,x_1)}} \normp{\delta(0,\cdot)^{-3}}{L^{p'}\rbracket{Q_{3|\Delta x|}(0)}}\\
&+\sup_{\substack{(t_1,x_1)\in \\ Q_r(t_0,x_0)}} \dashint_{\substack{\Delta x := x_2-x_1 \\ \in B(0,2r)}}   \normp{\theta u}{L^p(0,T\times\bbR^2)} |\Delta x| \normp{\delta(0,\cdot)^{-4}}{L^{p'}\rbracket{\delta(0,z)>2|\Delta x|}} \\
\lesssim &\sup_{\substack{(t_1,x_1)\in \\ Q_r(t_0,x_0)}} \dashint_{\substack{\Delta x := x_2-x_1 \\ \in B(0,2r)}}  
\normp{\theta u}{L^p(0,T\times\bbR^2)} |\Delta x|^{\frac{2a+2}{p'}-3} \lesssim  \normp{\theta u}{L^p((0,T)\times\bbR^2)} r^{\frac{2a+2}{p'}-3}.
\end{align*}
For term $J_{13}$, we instead separate into ``inner" and ``outer" terms with respect to time, i.e. $s\in(0,2\Delta t)$ and $s\in(2\Delta t,t_1)$.  
\begin{align*}
J_{13}  \lesssim  &  \sup_{\substack{(t_1,x_1)\in \\ Q_r(t_0,x_0)}} \int_{\substack{\Delta t := t_1-t_2 \\ \in (0,2r^{2a})}} \int_{\substack{s\in(0,2\Delta t) \\ y\in\bbR^2}} \abs{\theta u (t_2-s,x_1-y)} \abs{\nabla p(s,y) - \nabla p(s+\Delta t,y)} \\
& + \sup_{\substack{(t_1,x_1)\in \\ Q_r(t_0,x_0)}} \int_{\substack{\Delta t := t_1-t_2 \\ \in (0,2r^{2a})}} \int_{\substack{s\in(2\Delta t,t_1) \\ y\in\bbR^2}} \abs{\theta u (t_2-s,x_1-y)} \abs{\nabla p(s,y) - \nabla p(s+\Delta t,y)}.
\end{align*}
Notice from above that the first term (i.e. ``inner-in-time" part) can be controlled in the same way as  $J_{11}$, so we simplify our computation and obtain
\begin{align*}  \sup_{\substack{(t_1,x_1)\in \\ Q_r(t_0,x_0)}} \int_{\substack{\Delta t := t_1-t_2 \\ \in (0,2r^{2a})}} \int_{\substack{s\in(0,2\Delta t) \\ y\in\bbR^2}} \abs{\theta u (t_2-s,x_1-y)} \abs{\nabla p(s,y) - \nabla p(s+\Delta t,y)}  \lesssim  \\
s \normp{\theta u}{L^p((0,T)\times\bbR^2)} r^{\frac{2a+2}{p'}-3}  
\end{align*}
and the second ``outer-in-time" term is estimated as follows:
\begin{align*}
& \sup_{\substack{(t_1,x_1)\in \\ Q_r(t_0,x_0)}} \int_{\substack{\Delta t := t_1-t_2 \\ \in (0,2r^{2a})}} \int_{\substack{s\in(2\Delta t,t_1) \\ y\in\bbR^2}} \abs{\theta u (t_2-s,x_1-y)} \abs{\nabla p(s,y) - \nabla p(s+\Delta t,y)} \\
\lesssim & \sup_{\substack{(t_1,x_1)\in \\ Q_r(t_0,x_0)}} \int_{\substack{\Delta t := t_1-t_2 \\ \in (0,2r^{2a})}} \int_{\substack{s\in(2\Delta t,t_1) \\ y\in\bbR^2}} \abs{\theta u (t_2-s,x_1-y)} \abs{\sup_{h\in[0,1]}\frac{d}{dh}\nabla p(s+h\Delta t,y)} \\
 \lesssim & \sup_{\substack{(t_1,x_1)\in \\ Q_r(t_0,x_0)}} \int_{\substack{\Delta t := t_1-t_2 \\ \in (0,2r^{2a})}} \rbracket{\int_{\substack{s\in(2\Delta t,t_1) \\ |y|\leq s^{1/2a}}}+ \int_{\substack{s\in(2\Delta t,t_1) \\ |y|>s^{1/2a}}}}\frac{\abs{\theta u (t_2-s,x_1-y)}}{\delta(0,(s,y))^3}\frac{\Delta t}{s} \\
\lesssim &  \sup_{\substack{(t_1,x_1)\in \\ Q_r(t_0,x_0)}} \int_{\substack{\Delta t := t_1-t_2 \\ \in (0,2r^{2a})}} \int_{s\in(2\Delta t,t_1)} |\Delta t| \normp{\theta u(t_2-s,\cdot)}{L^p(\bbR^2)} s^{\frac1{ap'}-\frac{3}{2a}-1} \\
\lesssim &  \sup_{\substack{(t_1,x_1)\in \\ Q_r(t_0,x_0)}} \int_{\substack{\Delta t := t_1-t_2 \\ \in (0,2r^{2a})}}   \normp{\theta u}{L^p((0,T)\times\bbR^2)} |\Delta t|^{\frac{a+1}{ap'}-\frac{3}{2a}} \\
\lesssim & \normp{\theta u}{L^p((0,T)\times\bbR^2)} r^{\frac{2a+2}{p'}-3}  .
\end{align*}
This finishes the proof that $J_1 \in \Campanato{1,3-\frac{2a+2}{p'}}{(0,T)\times\bbR^2} = C^{3-\frac{2a+2}{p'}}_{loc,\delta}((0,T)\times\bbR^2)$. The proof of H\"older regularity for $J_2$ is similar, merely  replacing $\theta u$ by $A(\nabla d)$ and replacing $\nabla p$ by $Lp$.  

\subsection{Proof of the main Theorem \ref{Thm: Reg-intro}}

First by Theorems \ref{Prop: Reg1-intro} and  \ref{Prop: Reg2-intro} we conclude that $\theta,~u,~\nabla d\in L^p((0,T)\times\bbR^2)$ for any $p\geq 2$ and $\theta,~u\in C^\alpha_{loc}((0,T)\times\bbR^2)$ for some $\alpha>0$. We would like to show H\"older regularity of $\nabla d$ as well. Consider
$$ \partial_t d -\Delta d = -u\cdot \nabla d + |\nabla d|^2 d .$$
We use $W^{2,1}_p$-estimate for heat equation to show that $d\in W^{2,1}_p(P_r(z))$ for any parabolic cylinder $P_r(z)\subseteq (0,T)\times\bbR^2$ and any $p\geq 2$. As a consequence, Sobolev inequality shows that $\nabla d\in C^\alpha(P_{r}(z))$ for some $\alpha>0$.

As a consequence, Schauder estimate gives $d\in C^{2,1,\alpha}_{loc}((0,T)\times\bbR^2)$.   Expression \eqref{eqn: F(d)} for $F(d)$ gives $F(d)\in C^\alpha_{loc}((0,T)\times\bbR^2)$. Then we follow the same argument as in Theorem \ref{Prop: Reg2-intro} on second-order finite difference $\theta(z+h_0)+\theta(z-h_0)-2\theta(z) $ to show that that $\theta,u\in C^{1,\alpha}_{loc}((0,T)\times\bbR^2)$. After that we can bootstrap to smoothness.

\section*{Appendix: Parabolic potentials and Morrey spaces}

We collect in this section several intermediate results, instrumental towards the proof. Most of the results are already in the literature but we provide an argument for those which are particular to our framework. 

\begin{definition}
We denote $P_r(x,t)$ to be parabolic cylinder
$$ P_r(x,t) = \cbracket{ (x',t'): \max(|x-x'|,|t-t'|^{\frac12}) < r }, $$
and let $Q_r(x,t)$ denote the fractional parabolic cylinder
$$ Q_r(x,t) = \cbracket{ (x',t'): \max(|x-x'|, |t-t'|^{\frac{1}{2a}})<r } .$$
We use $\delta$ to denote fractional parabolic metric
$$ \delta((x,t),(x',t')) =   \max(|x-x'|, |t-t'|^{\frac{1}{2a}}). $$
Also we use $\Omega_T$ to denote $\Omega\times(0,T)$ where $\Omega\subseteq\bbR^2$.
\end{definition}



\begin{lemma}[Fractional Parabolic Poincare Inequality] Let fractional parabolic cylinder be $Q_r(x,t)=\cbracket{(y,s)\in\bbR^{n+1}: |x-y|,|t-s|^{\frac1{2a}} \leq r} = B_r \times T_r $ where $B_r =\cbracket{ |x-y|\leq r }$ and $T_r = \cbracket{|t-s|\leq r^{2a}} $. We claim that
\[ \dashint_{Q_r} |\theta - \theta_{Q_r}|^p \lesssim \dashint_{Q_r} r^{2\beta p}|\FL{\beta}\theta|^p + r^{2ap}|\partial_t\theta|^p. \]
\end{lemma}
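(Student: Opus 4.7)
The plan is to decompose the parabolic oscillation into a purely spatial contribution and a purely temporal contribution, controlling each separately. I would start from the elementary bound
\[ \dashint_{Q_r}|\theta-\theta_{Q_r}|^p \leq \dashint_{Q_r}\dashint_{Q_r} |\theta(y,s)-\theta(y',s')|^p \, dy\,ds\,dy'\,ds', \]
and write $\theta(y,s)-\theta(y',s') = [\theta(y,s)-\theta(y,s')] + [\theta(y,s')-\theta(y',s')]$, so that by the convexity bound $|a+b|^p \leq 2^{p-1}(|a|^p+|b|^p)$ the integrand splits cleanly.

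For the temporal piece, the fundamental theorem of calculus gives $\theta(y,s)-\theta(y,s') = \int_{s'}^{s}\partial_t\theta(y,\tau)\,d\tau$, and H\"older's inequality in $\tau$ yields $|\theta(y,s)-\theta(y,s')|^p \leq |s-s'|^{p-1}\int_{T_r}|\partial_t\theta(y,\tau)|^p\,d\tau$. Since $|s-s'|\leq 2r^{2a}$ and $|T_r|\sim r^{2a}$, averaging $(s,s',y)$ over $T_r\times T_r\times B_r$ produces the scale-correct bound $\lesssim r^{2ap}\dashint_{Q_r}|\partial_t\theta|^p$. For the spatial piece at fixed $s'$, I would invoke the standard spatial fractional Poincar\'e inequality
\[ \dashint_{B_r} |\theta(\cdot,s') - (\theta(\cdot,s'))_{B_r}|^p \lesssim r^{2\beta p} \dashint_{B_r} |\FL{\beta}\theta(\cdot,s')|^p, \]
and then integrate in $s'\in T_r$ via Fubini to conclude. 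Scaling is consistent: under $\theta_r(y,s)=\theta(ry, r^{2a}s)$ the fractional Laplacian scales by $r^{2\beta}$ and $\partial_t$ by $r^{2a}$, so both terms on the right-hand side transform homogeneously.

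The main obstacle is the spatial fractional Poincar\'e itself, because $\FL{\beta}$ is nonlocal. I would derive it via the Riesz-potential representation
\[ \theta(y) - \theta(y') = c_\beta \int_{\bbR^n}\bigl(|y-z|^{2\beta-n}-|y'-z|^{2\beta-n}\bigr)\FL{\beta}\theta(z)\,dz, \]
splitting the $z$-integration into a near-field piece on $B_{Cr}$, controlled by the $L^p$-mapping properties of the truncated Riesz potential of order $2\beta$, and a far-field piece where the H\"older regularity of the kernel $|\cdot|^{2\beta-n}$ in a dyadic decomposition recovers tails that are absorbed by the Hardy--Littlewood maximal function of $\FL{\beta}\theta$. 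Once the slice-wise spatial statement is in hand, the Fubini assembly into the parabolic statement is routine and completes the proof.
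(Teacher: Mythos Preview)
Your approach matches the paper's: split the oscillation into a temporal piece (handled by the fundamental theorem of calculus plus H\"older, exactly as you do---the paper's intermediate manipulation with the factor $|t-s|^{1+p}$ is just a less transparent route to the same bound $|T_r|^{p-1}\|\partial_t\theta\|_{L^p(T_r)}^p$) and a spatial piece at fixed time, controlled by the fractional Sobolev--Poincar\'e inequality on $B_r$. The only substantive difference is that the paper simply cites the spatial inequality without proof, whereas you supply a Riesz-potential sketch; one caveat is that your far-field/maximal-function argument naturally yields the global norm $\|\FL{\beta}\theta\|_{L^p(\bbR^n)}$ rather than the local average $\dashint_{B_r}|\FL{\beta}\theta|^p$ appearing on the right-hand side, but the paper's one-line citation is no more precise on this point.
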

\begin{proof}
\begin{align*} LHS & =\dashint_{(x,t)\in Q_r} \abs{ \dashint_{s\in T_r} \theta(t,x) -\dashint_{y\in B_r}\theta(s,y) }^p  \\
& \leq   \dashint_{(x,t)\in Q_r} \dashint_{s\in T_r} \abs{  \theta(t,x) -\dashint_{y\in B_r}\theta(s,y) }^p \\
& \lesssim  \dashint_{(x,t)\in Q_r} \dashint_{s\in T_r} \abs{  \theta(t,x) -\theta(s,x)}^p +\abs{\theta(s,x) -\dashint_{y\in B_r}\theta(s,y) }^p,
\end{align*}
where the first term  
\begin{align*}
 \dashint_{x\in B_r}\dashint_{t\in T_r} \dashint_{s\in T_r} \frac{\abs{  \theta(t,x) -\theta(s,x)}^p }{|t-s|^{1+p}} |t-s|^{1+p} \lesssim&~   \dashint_{x\in B_r} |T_r|^{p-1} \normp{\partial_t\theta}{L^p(T_r)}^p\\
  \lesssim&~ \dashint_{Q_r} |\partial_t\theta|^p r^{2ap},
\end{align*}
and the second term 
\begin{align*}
 \dashint_{s\in T_r} \dashint_{ x\in B_r} \abs{\theta(s,x) -\dashint_{y\in B_r}\theta(s,y) }^p \leq &~\dashint_{s\in T_r} |B_r|^{p(2\beta)/2-1} \seminormp{\theta(s,\cdot)}{W^{2\beta,p}_x}^p \\
  \lesssim&~ \dashint_{Q_r} r^{2\beta p} \abs{\FL{\beta}\theta}^p. 
 \end{align*}
\end{proof}


We will make use of potential theory to derive the regularity and we will use the following Campanato-Morrey spaces. 

\begin{definition}[Parabolic Morrey space, Parabolic  Campanato space, Maximal operator]
Let $L^{p,\lambda}(\Omega_T)$ and $L^{p,\lambda}_*(\Omega_T)$, respectively denote the parabolic Morrey space, and the weak parabolic Morrey space, where $1\leq p<\infty, 0\leq\lambda\leq n+2$, and 
\begin{equation*}\begin{split}
\normp{f}{L^{p,\lambda}(\Omega_T)}^p &:= \sup_{r>0,z\in\Omega_T} r^{\lambda - (n+2)}\normp{f}{L^p(\Omega_T\cap P_r(z))}^p, \\
\normp{f}{L^{p,\lambda}_*(\Omega_T)}^p &:= \sup_{r>0,z\in\Omega_T} r^{\lambda - (n+2)}\normp{f}{L^{p,*}(\Omega_T\cap P_r(z))}^p .
\end{split}\end{equation*}

Let $\scrL^{p,\lambda}(\Omega_T)$ and $\scrL^{p,\lambda}_*(\Omega_T)$, respectively denote the  parabolic Campanato space, and the weak  parabolic Campanato space, where $1\leq p<\infty, -p<\lambda< n+2$, and 
\begin{equation*}\begin{split}
\normp{f}{\scrL^{p,\lambda}(\Omega_T)}^p& := \sup_{r>0,z\in\Omega_T} r^{\lambda - (n+2)}\normp{f-(f)_{\Omega_T\cap P_r(z)}}{L^p(\Omega_T\cap P_r(z))}^p, \\
\normp{f}{\scrL^{p,\lambda}_*(\Omega_T)}^p& := \sup_{r>0,z\in\Omega_T} r^{\lambda - (n+2)}\normp{f-(f)_{\Omega_T\cap P_r(z)}}{L^{p,*}(\Omega_T\cap P_r(z))}^p ,
\end{split}\end{equation*}
where $(f)_{\Omega_T\cap P_r(z)}$ denotes the average value of $f$ on domain $\Omega_T\cap P_r(z)$:
$$ (f)_{\Omega_T\cap P_r(z)} = \frac{1}{|\Omega_T\cap P_r(z)|} \int_{\Omega_T\cap P_r(z)} f(z') dz' .$$
The maximal operator $M_\alpha f, 0<\alpha<n+2a$, is defined as follows:
$$ M_\alpha f(z) = \sup_{r>0} r^\alpha \dashint_{z'=(t',x')\in P_r(z)} |f(z')| dz', \ \ z=(t,x) .$$
\end{definition}

\begin{definition}[Fractional Parabolic Morrey space, Fractional Parabolic  Campanato space, Maximal operator]
Let $\Morrey{p,\lambda}{\Omega_T}$ and $\wMorrey{p,\lambda}{\Omega_T}$, respectively denote the (fractional) parabolic Morrey space, and the weak fractional parabolic Morrey space, where $1\leq p<\infty, 0\leq\lambda\leq n+2a$, and 
\begin{equation*}\begin{split}
\normp{f}{\Morrey{p,\lambda}{\Omega_T}}^p &:= \sup_{r>0,z\in\Omega_T} r^{\lambda - (n+2a)}\normp{f}{L^p(\Omega_T\cap Q_r(z))}^p, \\
\normp{f}{\wMorrey{p,\lambda}{\Omega_T}}^p &:= \sup_{r>0,z\in\Omega_T} r^{\lambda - (n+2a)}\normp{f}{L^{p,*}(\Omega_T\cap Q_r(z))}^p .
\end{split}\end{equation*}

Let $\Campanato{p,\lambda}{\Omega_T}$ and $\wCampanato{p,\lambda}{\Omega_T}$, respectively denote the fractional parabolic Campanato space, and the weak fractional parabolic Campanato space, where $1\leq p<\infty, -p<\lambda< n+2a$, and 
\begin{equation*}\begin{split}
\normp{f}{\Campanato{p,\lambda}{\Omega_T}}^p& := \sup_{r>0,z\in\Omega_T} r^{\lambda - (n+2a)}\normp{f-(f)_{\Omega_T\cap Q_r(z)}}{L^p(\Omega_T\cap Q_r(z))}^p, \\
\normp{f}{\wCampanato{p,\lambda}{\Omega_T}}^p& := \sup_{r>0,z\in\Omega_T} r^{\lambda - (n+2a)}\normp{f-(f)_{\Omega_T\cap Q_r(z)}}{L^{p,*}(\Omega_T\cap Q_r(z))}^p ,
\end{split}\end{equation*}
where $(f)_{\Omega_T\cap Q_r(z)}$ denotes the average value of $f$ on domain $\Omega_T\cap Q_r(z)$:
$$ (f)_{\Omega_T\cap Q_r(z)} = \frac{1}{|\Omega_T\cap Q_r(z)|} \int_{\Omega_T\cap Q_r(z)} f(z') dz'. $$
The maximal operator $\widetilde{M}_\alpha f, 0<\alpha<n+2a$, is defined as follows:
$$ \widetilde{M}_\alpha f(z) = \sup_{r>0} r^\alpha \dashint_{z'=(t',x')\in Q_r(z)} |f(z')| dz', \ \ z=(t,x) .$$
\end{definition}
\emph{Remarks:} Notice that the definition of fractional parabolic Morrey and Campanato space degenerates to parabolic Morrey and Campanato space as long as one take $a$ by $1$ and substitute $Q_r$ by $P_r$. Thus the following Lemmas on fractional parabolic Morrey and Campanato space all have  analogous versions in ordinary parabolic space, proved by the same argument.

\begin{lemma}[Identification of Fractional Parabolic  Campanato space]\label{Lem:  Campanato} For bounded domain $\Omega_T$, the fractional parabolic Campanato space $\Campanato{p,\lambda}{\Omega_T}$ can be identified as follows:
$$\Campanato{p,\lambda}{\Omega_T} = 
\begin{cases}  \Morrey{p,\lambda}{\Omega_T} & 0<\lambda<n+2a, \\  BMO(\Omega_T) & \lambda = 0, \\ C^{\alpha}_\delta(\Omega_T) & -p<\lambda<0, \alpha = -\frac{\lambda}{p}.
\end{cases} $$
\end{lemma}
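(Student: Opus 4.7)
The plan is to carry out Campanato's classical identification argument in the fractional parabolic geometry $(\bbR^{n+1},\delta)$, which is a space of homogeneous type with doubling exponent $n+2a$, so that $\abs{Q_{2r}(z)}\sim 2^{n+2a}\abs{Q_r(z)}$. Writing $D=\rmdiam_\delta(\Omega_T)<\infty$, the pivotal ingredient is the following dyadic telescoping estimate: for $z\in\Omega_T$, $f\in\Campanato{p,\lambda}{\Omega_T}$ and $0<r\leq R\leq D$,
\begin{equation*}
\abs{(f)_{Q_r(z)\cap\Omega_T}-(f)_{Q_R(z)\cap\Omega_T}} \lesssim \sum_{k:\, r\leq 2^k r\leq 2R} (2^k r)^{-\lambda/p}\,\norm{f}_{\Campanato{p,\lambda}{\Omega_T}},
\end{equation*}
which follows from Jensen's inequality on the nested pair $Q_{2^k r}(z)\cap\Omega_T\subset Q_{2^{k+1}r}(z)\cap\Omega_T$ together with the defining Campanato bound on $Q_{2^{k+1}r}(z)\cap\Omega_T$. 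The sign of $\lambda$ dictates whether this geometric sum is dominated by its first or its last term, and this is what produces the three distinct regimes.

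For the Morrey range $0<\lambda<n+2a$, the embedding $\Morrey{p,\lambda}{\Omega_T}\hookrightarrow\Campanato{p,\lambda}{\Omega_T}$ is immediate from the triangle inequality; for the converse, the ratio $2^{-\lambda/p}<1$ makes the telescoping sum geometric and controlled by its first term, yielding $\abs{(f)_{Q_r(z)\cap\Omega_T}}\lesssim \abs{(f)_{\Omega_T}}+r^{-\lambda/p}\norm{f}_{\Campanato{p,\lambda}{\Omega_T}}$. Combining this with the defining bound $\norm{f-(f)_{Q_r(z)\cap\Omega_T}}_{L^p(Q_r(z)\cap\Omega_T)}\leq r^{(n+2a-\lambda)/p}\norm{f}_{\Campanato{p,\lambda}{\Omega_T}}$ then gives $\norm{f}_{L^p(Q_r(z)\cap\Omega_T)}\lesssim r^{(n+2a-\lambda)/p}$, i.e.\ $f\in\Morrey{p,\lambda}{\Omega_T}$. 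The BMO case $\lambda=0$ is essentially by definition up to the equivalence of the $L^p$- and $L^1$-BMO seminorms, which is the John--Nirenberg inequality in the Coifman--Weiss theory of spaces of homogeneous type applied to $(\bbR^{n+1},\delta)$.

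For the Hölder range $-p<\lambda<0$, the geometric sum now has ratio $2^{-\lambda/p}>1$ and is dominated by its \textsl{largest} term $R^{-\lambda/p}$. This shows at once that $\{(f)_{Q_r(z)\cap\Omega_T}\}_{r>0}$ is Cauchy as $r\to 0$ and thus selects a precise representative $\tilde f(z)$ which equals $f(z)$ almost everywhere by Lebesgue differentiation in $(\bbR^{n+1},\delta)$. To obtain the Hölder inequality, given $z_1,z_2\in\Omega_T$ with $r=\delta(z_1,z_2)$, I would enclose both in the single cylinder $Q_{3r}(z_1)\supset Q_r(z_2)$ and chain, on each side, the dyadic telescoping from $\tilde f(z_i)$ down to the common average $(f)_{Q_{3r}(z_1)\cap\Omega_T}$, yielding $\abs{\tilde f(z_1)-\tilde f(z_2)}\lesssim \delta(z_1,z_2)^{-\lambda/p}\norm{f}_{\Campanato{p,\lambda}{\Omega_T}}$ with exponent $\alpha=-\lambda/p$. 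The reverse inclusion $C^\alpha_\delta(\Omega_T)\hookrightarrow\Campanato{p,\lambda}{\Omega_T}$ follows directly upon using $f(z)$ in place of the average, together with $\abs{f(z')-f(z)}\leq \norm{f}_{C^\alpha_\delta}\delta(z',z)^\alpha$ for $z'\in Q_r(z)\cap\Omega_T$.

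The only genuine obstacle is bookkeeping: verifying that the quasi-metric $\delta$ (an actual metric only when $a\geq\tfrac12$) still yields volume doubling, a Lebesgue differentiation theorem, and the John--Nirenberg inequality. All three are standard in the Coifman--Weiss framework provided $(\bbR^{n+1},\delta)$ satisfies a doubling condition and a quasi-triangle inequality with universal constant, both of which are straightforward from the definition of $\delta$. Once these ingredients are granted, the proof is Campanato's original argument transposed verbatim, and no additional tool beyond the dyadic telescoping is required.
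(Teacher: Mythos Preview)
Your proposal is correct and follows essentially the same route as the paper: both isolate the dyadic telescoping estimate $\abs{(f)_{Q_{2^{-(i+1)}\rho}}-(f)_{Q_{2^{-i}\rho}}}\lesssim (2^{-i}\rho)^{-\lambda/p}\norm{f}_{\Campanato{p,\lambda}{\Omega_T}}$ as the key ingredient and then read off the three regimes from the sign of $\lambda$, exactly as in Campanato's classical argument. The only cosmetic differences are that the paper compares $(f)_{Q_{2R}(x)}$ and $(f)_{Q_{2R}(y)}$ via their intersection rather than via a common enclosing cylinder, and treats the BMO case as definitional, whereas you (more carefully) invoke John--Nirenberg for the $L^p$--$L^1$ equivalence and flag the quasi-metric issue for $a<\tfrac12$; in the paper's regime $a\in(\tfrac12,1)$ the latter point is moot since $\delta$ is then a genuine metric.
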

\begin{proof}

The proof of this lemma uses the following technical result for $f\in \Campanato{p,\lambda}{\Omega_T}$:
\begin{equation}\label{Eqn: Tech2} \ \ \ \ \ \ \ \ \ \ \ \ \ 
\abs{(f)_{Q_{\rho 2^{-(i+1)}}(x_0)} - (f)_{Q_{\rho 2^{-i}}}(x_0) }  \leq C(n,p,\lambda) \rho^{\frac{-\lambda}{p}}2^{i(\frac{\lambda}{p})} \normp{f}{\Campanato{p,\lambda}{\Omega_T}}.
\end{equation}

 Take $0<r_1<r_2\leq \diam(\Omega_T)$ and $x_0\in\Omega$. We can calculate that 
\begin{align*}
& \abs{(f)_{Q_{r_1}(x_0)} - (f)_{Q_{r_2}(x_0)} }^p \lesssim  \abs{f(x) - (f)_{Q_{r_1}(x_0)} }^p + \abs{f(x) - (f)_{Q_{r_2}(x_0)} }^p \\
& \lesssim \dashint_{Q_{r_1}(x_0)} \abs{f(x) - (f)_{Q_{r_1}(x_0)} }^p  + \dashint_{Q_{r_2}(x_0)} \abs{f(x) - (f)_{Q_{r_2}(x_0)} }^p \\
& \lesssim {r_1}^{-(n+2a)}(r_1^{n+2a-\lambda}  + r_2^{n+2a-\lambda}) \normp{f}{\Campanato{p,\lambda}{\Omega_T}}^p.
\end{align*}
Fix $0<\rho\leq 2\diam(\Omega_T)$ and we substitute $r_1=\rho 2^{-(i+1)}$ and $r_2=\rho 2^{-i}$ to finish the proof of \eqref{Eqn: Tech2}.  

Part 1 consists of proving that fractional parabolic Campanato space equals fractional parabolic Morrey space. $\Campanato{p,\lambda}{\Omega_T} = \Morrey{p,\lambda}{\Omega_T} $. One direction $f\in \Morrey{p,\lambda}{\Omega_T}\hookrightarrow \Campanato{p,\lambda}{\Omega_T}$ is direct because one can show that
$$ \normp{f}{\Campanato{p,\lambda}{\Omega}}^p = \sup_{r>0,z\in\Omega_T} r^{\lambda-(n+2a)} \inf_{c\in\bbR} \normp{f-c}{L^p(\Omega_T\cap Q_r(z))}^p \leq \normp{f}{\Morrey{p,\lambda}{\Omega}}^p .$$
To prove the other direction $f\in\Campanato{p,\lambda}{\Omega_T} \hookrightarrow \Morrey{p,\lambda}{\Omega_T}$, we fix $0<\rho<\diam(\Omega_T)$ and choose $k$ such that $\frac{\diam(\Omega_T)}{2^{k+1}} \leq \rho < \frac{\diam(\Omega_T)}{2^{k}} $, we can show that 
$$ \abs{(f)_{Q_\rho(x)}} \leq \abs{(f)_{\Omega_T}} +  \abs{(f)_{\Omega_T} - (f)_{Q_{ \diam(\Omega_T)2^{-k}}(x)}} + \abs{ (f)_{Q_{\diam(\Omega_T) 2^{-k}}(x)} - (f)_{Q_\rho(x)}}  .$$
By \eqref{Eqn: Tech2}, we can control the second and the third term on the right hand side by
\begin{align*}
 \abs{(f)_{\Omega_T} - (f)_{Q_{ \diam(\Omega_T)2^{-k}}(x)}} & \lesssim \normp{f}{\Campanato{p,\lambda}{\Omega_T}} \diam(\Omega_T)^{-\frac{\lambda}{p}} \sum_{i=0}^k 2^{i\frac{\lambda}{p}} \\
& \lesssim \normp{f}{\Campanato{p,\lambda}{\Omega_T}} \rho^{-\frac{\lambda}{p}} \frac{1- 2^{-k\frac{\lambda}{p}}}{2^{\frac{\lambda}{p}}-1} ,\\
\abs{ (f)_{Q_{\diam(\Omega_T) 2^{-k}}(x)} - (f)_{Q_\rho(x)}}  & \lesssim \rho^{-\frac{n+2a}{p}} \rbracket{\rho^{\frac{n+2a-\lambda}{p}}+ \rbracket{\frac{d}{2^k}}^{\frac{n+2a-\lambda}{p}}} \normp{f}{\Campanato{p,\lambda}{\Omega_T}} \\
& \lesssim  \normp{f}{\Campanato{p,\lambda}{\Omega_T}} \rho^{-\frac{\lambda}{p}} 2^{\frac{n+2a-\lambda}{p}},
\end{align*}
which implies that $\abs{(f)_{Q_\rho(x)}}\lesssim\normp{f}{\Campanato{p,\lambda}{\Omega_T}} \rho^{-\frac{\lambda}{p}} $ and hence $f\in \Morrey{p,\lambda}{\Omega_T}$.

Part 2 that $\Campanato{p,0}{\Omega_T} = BMO(\Omega_T)$ is obvious because their definition concides.

Part 3 consists of proving that $\Campanato{p,\lambda}{\Omega_T} = C^{-\frac{\lambda}{p}}_\delta(\Omega_T)$ when $-p<\lambda<0$. We follow the argument in \cite{MRTroianiello87}.
One direction that $f \in C^\alpha_\delta(\Omega_T) \hookrightarrow \Campanato{p,-p\alpha}{\Omega_T}$ is direct. Notice that  integral mean value theorem gives for $r>0$:
$$ \abs{ f(x) - (f)_{Q_r(x_0)\cap \Omega_T} }\leq [f]_{C^\alpha_\delta}^p 2^{p\alpha} r^{p\alpha}r^{n+2a} $$
and hence by definition $\normp{f}{\Campanato{p,-p\alpha}{\Omega_T}} \leq 2^\alpha [f]_{C^\alpha_\delta}\lesssim \normp{f}{C^\alpha_\delta(\Omega_T)}$.

Now we show that $f \in \Campanato{p,\lambda}{\Omega_T} \hookrightarrow C^{-\frac{\lambda}{p}}_\delta(\Omega_T)$.
We take $k_2>k_1>k_0$ in \eqref{Eqn: Tech2} and then we have 
$$ \abs{(f)_{Q_{\rho 2^{-k_2}}(x_0)} - (f)_{Q_{\rho 2^{-k_1}}(x_0)}} \lesssim \rho^{-\frac{\lambda}{p}}  \normp{f}{\Campanato{p,\lambda}{\Omega_T}} \sum_{i=k_0}^\infty 2^{i\frac{\lambda}{p}} $$
which converges to $0$ as $k_0\to\infty$, thus $\cbracket{(f)_{Q_{\rho 2^{-i}}(x_0)}}_{i=1}^\infty$ is a Cauchy sequence and hence converges. We redefine $f(x_0)$ by $\lim_{i\to\infty}(f)_{Q_{\rho 2^{-i}}(x_0)}$ for non-Lebesgue point $x_0$ of $f$, which is measure zero. Thus our redefinition of $f$  a  version of $f$ by ``killing" its non-Lebesgue point. Thus we have for $0<r\leq \diam(\Omega_T)$,
$$ \abs{ f(x_0) -(f)_{Q_r(x_0)} } \lesssim r^{-\frac{\lambda}{p}} \normp{f}{\Campanato{p,\lambda}{\Omega_T}} .$$
By taking $r=\diam(\Omega_T)$ and using H\"older inequality, above estimate also gives us the following $L^\infty$-bound:
$$ \sup_{\Omega_T} |f| \lesssim \diam(\Omega_T)^{-\frac{\lambda}{p}} \normp{f}{\Campanato{p,\lambda}{\Omega_T}} + |(f)_{\Omega_T}|  \lesssim   \normp{f}{\Campanato{p,\lambda}{\Omega_T}} + \normp{f}{L^p(\Omega_T)} .$$
Consider $x,y\in \Omega_T$ with $\delta(x,y)=R<\frac12\diam(\Omega_T)$, 
and denote $E = Q_{2R}(x)\cap Q_{2R}(y)\cap \Omega_T \supseteq Q_R(x)\cap\Omega_T$, we can compute that 
\begin{align*}
\abs{ (f)_{Q_{2R}(x)} - (f)_{Q_{2R}(y)} }^p & \lesssim \dashint_E \abs{(f)_{Q_{2R}(x)} - f(z) }^p dz + \dashint_E \abs{(f)_{Q_{2R}(y)} - f(z) }^p dz \\
& \lesssim R^{-(n+2a)} R^{n+2a-\lambda} \normp{f}{\Campanato{p,\lambda}{\Omega_T}}^p = R^{-\lambda} \normp{f}{\Campanato{p,\lambda}{\Omega_T}}^p 
\end{align*}
and hence
\begin{align*}
\abs{ f(x) - f(y) } & \leq \abs{f(x) - (f)_{Q_{2R}(x)}} + \abs{ f(y) - (f)_{Q_{2R}}(y)}  + \abs{ (f)_{Q_{2R}(x)} - (f)_{Q_{2R}(y)} } \\
& \lesssim \normp{f}{\Campanato{p,\lambda}{\Omega_T}} R^{-\frac{\lambda}{p}} + \abs{ (f)_{Q_{2R}(x)} - (f)_{Q_{2R}(y)} } \lesssim \normp{f}{\Campanato{p,\lambda}{\Omega_T}} R^{-\frac{\lambda}{p}} .
\end{align*}
If instead $\delta(x,y)\geq \frac12\diam(\Omega_T)$, we can estimate discrete displacement of $f$ by
$$ \abs{f(x) - f(y)} \leq 2\normp{f}{L^\infty(\Omega_T)} \lesssim \normp{f}{\Campanato{p,\lambda}{\Omega_T}} \rbracket{\frac{\delta(x,y)}{\diam(\Omega)}}^{-\frac{\lambda}{p}} .$$
\end{proof}

\begin{lemma}[$\widetilde{M}_0$ on $\Morrey{p,\lambda}{\Omega_T}$]\label{Lem: M0Morrey} Maximal operator $\widetilde{M}_0$ is continuous on $\Morrey{p,\lambda}{\Omega_T}$ for $p\in(1,\infty)$ and $\lambda \in(0,n+2a]$.
\end{lemma}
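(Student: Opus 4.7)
The strategy is the standard Chiarenza--Frasca decomposition, adapted to the fractional parabolic metric $\delta$. The ambient measure space $(\bbR^{n+1},\delta,dz)$ is of homogeneous type (the fractional parabolic cylinders $Q_r(z)$ have volume comparable to $r^{n+2a}$ and enjoy the usual doubling property), so the Hardy--Littlewood--type maximal operator $\widetilde{M}_0$ is of weak type $(1,1)$ and strong type $(p,p)$ on $L^p(\bbR^{n+1})$ for $p\in(1,\infty)$ by the Vitali covering argument. This $L^p$-boundedness will be the only ``hard'' analytic input; the Morrey estimate is then obtained by a localization trick.

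Fix $f\in\Morrey{p,\lambda}{\Omega_T}$, a point $z_0\in\Omega_T$ and a radius $r>0$. Split $f=f_1+f_2$ with $f_1=f\,\II_{Q_{2r}(z_0)\cap\Omega_T}$ and $f_2=f-f_1$, so that $\widetilde{M}_0 f\leq \widetilde{M}_0 f_1+\widetilde{M}_0 f_2$. For the \emph{local} piece, strong $(p,p)$ boundedness of $\widetilde{M}_0$ on $L^p(\bbR^{n+1})$ yields
\[
\normp{\widetilde{M}_0 f_1}{L^p(Q_r(z_0)\cap\Omega_T)}^p\lesssim \normp{f}{L^p(Q_{2r}(z_0)\cap\Omega_T)}^p\lesssim r^{n+2a-\lambda}\normp{f}{\Morrey{p,\lambda}{\Omega_T}}^p,
\]
which is exactly what the Morrey norm requires.

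For the \emph{far-field} piece I argue by a pointwise bound. If $z\in Q_r(z_0)$ and $s\leq r$, then $Q_s(z)\subset Q_{2r}(z_0)$ and consequently $f_2\equiv 0$ on $Q_s(z)$; hence only radii $s>r$ contribute. For such $s$ the triangle inequality for $\delta$ gives $Q_s(z)\subset Q_{2s}(z_0)$, so by H\"older,
\[
\dashint_{Q_s(z)}|f_2|\lesssim s^{-(n+2a)}|Q_{2s}(z_0)|^{1/p'}\normp{f}{L^p(Q_{2s}(z_0)\cap\Omega_T)}\lesssim s^{-\lambda/p}\normp{f}{\Morrey{p,\lambda}{\Omega_T}}.
\]
Taking the supremum over $s>r$ and using $s>r$ yields the pointwise estimate $\widetilde{M}_0 f_2(z)\lesssim r^{-\lambda/p}\normp{f}{\Morrey{p,\lambda}{\Omega_T}}$ valid on $Q_r(z_0)$, and integrating over this cylinder gives
\[
\normp{\widetilde{M}_0 f_2}{L^p(Q_r(z_0)\cap\Omega_T)}^p\lesssim r^{n+2a-\lambda}\normp{f}{\Morrey{p,\lambda}{\Omega_T}}^p.
\]
Combining the two bounds, multiplying by $r^{\lambda-(n+2a)}$, and taking the supremum over $(z_0,r)$ gives $\normp{\widetilde{M}_0 f}{\Morrey{p,\lambda}{\Omega_T}}\lesssim\normp{f}{\Morrey{p,\lambda}{\Omega_T}}$, as claimed. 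The only place that genuinely uses $p>1$ is the $L^p$-boundedness on the whole space in the estimate for $f_1$; the condition $\lambda\leq n+2a$ is needed so that $s^{-\lambda/p}$ stays integrable/bounded at the relevant scales, and $\lambda>0$ is implicitly needed because at $\lambda=0$ the above reduces to the usual $L^\infty\cap\mathrm{BMO}$-type endpoint rather than Morrey boundedness.
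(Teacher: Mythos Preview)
Your proof is correct and follows essentially the same Chiarenza--Frasca decomposition as the paper: split $f=f_1+f_2$ at scale $2r$, handle the local piece by the $L^p$-boundedness of $\widetilde{M}_0$ on the homogeneous-type space $(\bbR^{n+1},\delta,dz)$, and bound the far-field piece pointwise by observing that only radii $s>r$ contribute and applying H\"older against the Morrey norm. The paper's presentation is slightly terser in the far-field step but the argument is the same.
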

\begin{proof} 
For any $f\in\Morrey{p,\lambda}{\Omega_T}$, we decompose $f=f_1 + f_2$ where $f_1 = f \cdot \II \{Q_{2r}(z_0)\}$. Then
$$ \int_{z\in Q_r(z_0)} (\widetilde{M}_0 f)^p dz \leq 2^p  \int_{z\in Q_r(z_0)} (\widetilde{M}_0 f_1)^p dz +  \int_{z\in Q_r(z_0)} (\widetilde{M}_0 f_2)^p dz = 2^p(I_1 + I_2).$$
We use the fact that  maximal operator $\widetilde{M}_0$ is continuous on $L^p$, see \cite{Stein70}. As a consequence,
$$ I_1 \leq C \int_{z\in \bbR^{n+1}}  |f_1(z)|^p dz \leq C \int_{Q_{2r}(z_0)}  |f(z)|^p dz \leq C r^{n+2a-\lambda}\normp{f}{\Morrey{p,\lambda}{\Omega_T}}^p .$$
On the other hand, for any $\rho>0$, we can compute that for $z\in Q_r(z_0)$,
\begin{align*} & \frac{1}{\rho^{n+2a}}\int_{Q_\rho(z)} |f_2(z')| dz' = \frac{1}{\rho^{n+2a}}\int_{\delta(z,z')<\rho,\delta(z_0,z')>2r} |f(z')| dz'   \\
\leq & c \rho^{-\frac{\lambda}{p}} \rbracket{\rho^{\lambda - (n+2a)} \int_{\delta(z,z')<r} |f(z')|^p dz' }^{\frac1p} \leq c r^{-\frac{\lambda}{p}} \normp{f}{\Morrey{p,\lambda}{\Omega_T}},
\end{align*}
where we have used the fact that $\delta(z,z')<\rho,\delta(z_0,z')>2r,\delta(z_0,z)<r$ implies that above integral is nontrivial only when $\rho\geq r$. As a consequence, above inequality allows us to estimate $I_2\leq C r^{n+2a-\lambda} \normp{f}{\Morrey{p,\lambda}{\Omega_T}}^p$. Hence, 
$$ \normp{\widetilde{M}_0f}{\Morrey{p,\lambda}{\Omega_T}}^p = \sup_{r>0,z\in\Omega_T} r^{\lambda-(n+2a)}\normp{M_0f}{L^p(\Omega_T\cap Q_r(z))}^p \leq C \normp{f}{\Morrey{p,\lambda}{\Omega_T}}^p  ,$$ and the result follows.
\end{proof}

\begin{lemma}[Riesz potential estimates between fractional parabolic Morrey space, Regularity I]\label{Lem: Riesz potential1} 
Let $\pRieszP{\beta}{f}$ denote the fractional parabolic Riesz potential of order $\beta\in[0,n+2a]$,
$$ \pRieszP{\beta}{f}=\int_{\bbR^{n+1}} \frac{f(y,s)}{\delta((x,t),(y,s))^{n+2a-\beta}}dyds ,$$
where $f\in L^p(\bbR^{n+1})$ and fractional parabolic distance $\delta((x,t),(y,s))=\max(|x-y|,|t-s|^{\frac1{2a}})$. Then we have the following improved regularity:
\begin{itemize}
\item For any $\beta>0, 0<\lambda\leq n+2a, 1<p<\frac{\lambda}{\beta}$, if $f\in L^p(\bbR^{n+1})\cap \Morrey{p,\lambda}{\bbR^{n+1}}$, then $\pRieszP{\beta}{f}\in L^{\tilde{p}}(\bbR^{n+1})\cap \Morrey{\tilde{p},\lambda}{\bbR^{n+1}}$ with $\tilde{p} = \frac{p\lambda}{\lambda - p\beta}$. 
\item For any $0<\beta<\lambda\leq n+2a$, if $f\in L^1(\bbR^{n+1})\cap\Morrey{1,\lambda}{\bbR^{n+1}}$, then $\pRieszP{\beta}{f}\in L^{\frac{\lambda}{\lambda-\beta},*}(\bbR^{n+1})\cap \wMorrey{\frac{\lambda}{\lambda-\beta},\lambda}{\bbR^{n+1}}$.
\end{itemize}
\end{lemma}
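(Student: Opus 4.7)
The plan is to adapt the classical Adams--Hedberg argument to the fractional parabolic setting, using the maximal operator $\widetilde{M}_0$ whose continuity on $\Morrey{p,\lambda}{\bbR^{n+1}}$ was established in Lemma \ref{Lem: M0Morrey}. Throughout, write $z=(t,x)$, $z'=(s,y)$ and split the integral defining $\pRieszP{\beta}{f}(z)$ into an inner piece over $\{\delta(z,z')<r\}$ and an outer piece over $\{\delta(z,z')\geq r\}$, with $r>0$ to be chosen.

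For the inner piece, the plan is to decompose dyadically: on the annulus $\{r 2^{-k-1}\leq \delta(z,z')< r 2^{-k}\}$ the kernel is bounded by $(r 2^{-k})^{-(n+2a-\beta)}$, and integration produces $(r 2^{-k})^\beta \dashint_{Q_{r 2^{-k}}(z)} |f|\leq (r 2^{-k})^\beta \widetilde{M}_0 f(z)$. Summing the geometric series yields the bound
\[
\int_{\delta(z,z')<r} \frac{|f(z')|}{\delta(z,z')^{n+2a-\beta}}dz' \;\lesssim\; r^\beta \,\widetilde{M}_0 f(z).
\]
For the outer piece, the dyadic annulus $\{r 2^{k}\leq \delta(z,z')< r 2^{k+1}\}$ gives a contribution controlled by $(r 2^k)^{\beta-(n+2a)}\int_{Q_{r 2^{k+1}}(z)}|f|$. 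When $p>1$, H\"older and the Morrey bound give this contribution $\lesssim (r 2^k)^{\beta-\lambda/p}\normp{f}{\Morrey{p,\lambda}{\bbR^{n+1}}}$, and the series converges since $p\beta<\lambda$. In the $p=1$ case one instead uses the Morrey bound directly to get $(r 2^k)^{\beta-\lambda}\normp{f}{\Morrey{1,\lambda}{\bbR^{n+1}}}$, convergent because $\beta<\lambda$. In both cases we obtain the pointwise inequality
\[
\pRieszP{\beta}{f}(z)\;\lesssim\; r^\beta \,\widetilde{M}_0 f(z) \;+\; r^{\beta-\lambda/p}\,\normp{f}{\Morrey{p,\lambda}{\bbR^{n+1}}}.
\]

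Optimizing $r$ (choose $r$ so that the two terms balance, i.e.\ $r^{\lambda/p}=\normp{f}{\Morrey{p,\lambda}{}}/\widetilde{M}_0 f(z)$) produces the Hedberg pointwise estimate
\[
\pRieszP{\beta}{f}(z)\;\lesssim\; \bigl(\widetilde{M}_0 f(z)\bigr)^{1-p\beta/\lambda}\,\normp{f}{\Morrey{p,\lambda}{\bbR^{n+1}}}^{p\beta/\lambda}.
\]
For the first assertion, raise both sides to the power $\tilde p=p\lambda/(\lambda-p\beta)$, notice that $\tilde p(1-p\beta/\lambda)=p$, integrate over $\Omega_T\cap Q_\rho(z_0)$, and use Lemma \ref{Lem: M0Morrey} (continuity of $\widetilde{M}_0$ on $\Morrey{p,\lambda}{}$) to conclude that $\pRieszP{\beta}{f}\in\Morrey{\tilde p,\lambda}{\bbR^{n+1}}$; the global $L^{\tilde p}$ membership follows from the classical Hardy--Littlewood--Sobolev inequality in the fractional parabolic setting (apply the same Hedberg bound with $\Morrey{p,\lambda}{}$ replaced by $L^p$, which corresponds to $\lambda=n+2a$).

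For the second assertion, the same Hedberg inequality with $p=1$ gives
\[
\pRieszP{\beta}{f}(z)\;\lesssim\; \bigl(\widetilde{M}_0 f(z)\bigr)^{1-\beta/\lambda}\,\normp{f}{\Morrey{1,\lambda}{\bbR^{n+1}}}^{\beta/\lambda}.
\]
Now the substitute for continuity of $\widetilde{M}_0$ on $L^1$ is the weak-$(1,1)$ bound $|\{\widetilde{M}_0 f>t\}|\lesssim t^{-1}\normp{f}{L^1}$, together with its localized Morrey version $|\{\widetilde{M}_0 f>t\}\cap Q_\rho|\lesssim t^{-1}\rho^{n+2a-\lambda}\normp{f}{\Morrey{1,\lambda}{}}$, proved by the same decomposition used in Lemma \ref{Lem: M0Morrey} but with the weak-$L^1$ bound replacing the $L^p$ bound on the inner part. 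Combining this with the pointwise inequality above and the relation $\frac{\lambda}{\lambda-\beta}(1-\beta/\lambda)=1$ yields exactly the required weak Morrey bound with exponent $\lambda/(\lambda-\beta)$.

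The main technical obstacle is carefully arranging the dyadic summation in the outer piece so that the Morrey norm (rather than an $L^p$ norm of $f$ on a fixed ball) appears with the correct power of $r$; the convergence of the geometric series relies crucially on the subcritical condition $p\beta<\lambda$, which is exactly what is assumed. A secondary point is checking that the weak-type localized maximal bound needed in the second assertion follows from the same splitting $f=f\ind_{Q_{2r}(z_0)}+f\ind_{Q_{2r}(z_0)^c}$ used in the proof of Lemma \ref{Lem: M0Morrey}, which is routine but must be done to honestly reduce to the known weak-$(1,1)$ property of $\widetilde{M}_0$ on $L^1(\bbR^{n+1})$.
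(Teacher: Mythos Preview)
Your proposal is correct and follows essentially the same Adams--Hedberg strategy as the paper: split the Riesz potential into inner and outer dyadic shells, bound the inner part by $r^\beta \widetilde{M}_0 f(z)$ and the outer part by $r^{\beta-\lambda/p}\normp{f}{\Morrey{p,\lambda}{}}$, optimize in $r$, and then invoke Lemma~\ref{Lem: M0Morrey}. The only cosmetic difference is that the paper phrases the outer bound via the intermediate maximal function $M_{\lambda/p}f$ (and then observes $M_{\lambda/p}f\leq\normp{f}{\Morrey{p,\lambda}{}}$), whereas you pass directly to the Morrey norm; and for the $p=1$ weak-type assertion the paper simply defers to \cite{Adams15}, while you spell out the localized weak-$(1,1)$ argument, which is a welcome addition.
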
 
\emph{Remarks:} Let $I_\beta(f)$ denote the parabolic Riesz potential of order $\beta\in[0,n+2]$, that is,
$$ I_\beta(f) = \int_{\bbR^{n+1}} \frac{f(y,s)}{\max(|x-y|,|t-s|^{\frac12})^{n+2-\beta}} dyds  $$
then we have an analogous improved regulairty: 
\begin{itemize}
\item For any $\beta>0, 0<\lambda\leq n+2, 1<p<\frac{\lambda}{\beta}$, if $f\in L^p(\bbR^{n+1})\cap L^{p,\lambda}(\bbR^{n+1})$, then $I_{\beta}(f)\in L^{\tilde{p}}(\bbR^{n+1})\cap L^{\tilde{p},\lambda}(\bbR^{n+1})$ with $\tilde{p} = \frac{p\lambda}{\lambda - p\beta}$. 
\item For any $0<\beta<\lambda\leq n+2$, if $f\in L^1(\bbR^{n+1})\cap L^{1,\lambda}(\bbR^{n+1})$, then $I_{\beta}(f) \in L^{\frac{\lambda}{\lambda-\beta}}_*(\bbR^{n+1})\cap L^{\frac{\lambda}{\lambda-\beta},\lambda}_*(\bbR^{n+1})$.
\end{itemize}

\begin{proof} It can be considered the analogue
of the Sobolev inequality in fractional parabolic Morrey Spaces. The proof follows the same argument of improved regularity of Morrey space for classical Riesz potential, see \cite{Adams15}. Recall that the maximal operator $M_\alpha f, 0<\alpha<n+2a$, is defined as follows:
$$ M_\alpha f(z) = \sup_{r>0} r^\alpha \dashint_{Q_r(z)} |f(z')| dz', \ \ z=(t,x),z'=(t',x')\in\bbR^{n+1} .$$
We use the Hedberg trick \cite{Hedberg72} and claim that 
\begin{equation}\label{Eqn: Hedberg} \pRieszP{\beta}{f}(z) \leq c [M_{\frac{\lambda}{p}}f(z)]^{\frac{\beta p}{\lambda}} [M_0 f(z)]^{1-\frac{\beta p}{\lambda}} .\end{equation}
To show this, we decompse $f=f_1 + f_2$ where $f_1 = f \cdot \II \{Q_{2r}(z)\}$ and then $\pRieszP{\beta}{f}=\pRieszP{\beta}{f_1}+\pRieszP{\beta}{f_2}=J_1+J_2$ where
\begin{align*}
J_1 & = \sum_{k=0}^\infty \int_{\delta(z,z')\in(2^{-k}r,2^{-k+1}r)} \delta(z,z')^{\beta-(n+2a)} f(z') dz' \\
& \leq \sum_{k=0}^\infty (2^{-k}r)^{\beta - (n+2a)} (2^{-k+1}r)^{n+2a} M_0f(x) \leq C r^\beta M_0f(x), \\
J_2 & = \sum_{k=1}^\infty \int_{\delta(z,z')\in(2^kr,2^{k+1}r)} \delta(z,z')^{\beta-(n+2a)} f(z') dz' \\
& \leq \sum_{k=1}^\infty (2^k r)^{\beta-(n+2a)} (2^{k+1}r)^{n+2a-\frac{\lambda}{p}} M_{\frac{\lambda}{p}} f(x) \leq C r^{\beta - \frac{\lambda}{p}} M_{\frac{\lambda}{p}}f(x)
\end{align*}
for $0<\beta<\frac{\lambda}{p}$. Now the claim is verified once we choose $r=[M_{\frac{\lambda}{p}}f(x) / M_0f(x)]^{\frac{p}{\lambda}}$. Moreover, observe that $M_{\frac{\lambda}{p}}f \leq (M_\lambda |f|^p)^{\frac1p} \leq \normp{f}{\Morrey{p,\lambda}{\Omega_T}}$. Thus we raise equation \eqref{Eqn: Hedberg} to power $\tilde{p}$ and integrate over fractional parabolic cylinder $Q_r(x_0)$ to obtain
$$ \int_{Q_r(x_0)} |\pRieszP{\beta}{f}(z) |^{\tilde{p}} dz \leq C   \normp{f}{\Morrey{p,\lambda}{\Omega_T}}^{\frac{\beta p}{\lambda} \tilde{p}} \int_{Q_r(x_0)} |M_0 f(z)|^p dz ,$$
which implies that $\pRieszP{\beta}{f}\in L^{\tilde{p}}$. Moreover, above inequality with Lemma \ref{Lem: M0Morrey} also implies that 
$$ r^{\lambda-(n+2a)} \int_{Q_r(x_0)} |\pRieszP{\beta}{f}(z) |^{\tilde{p}} dz  \leq C \normp{f}{\Morrey{p,\lambda}{\Omega_T}}^{\frac{\beta p}{\lambda} \tilde{p}} \normp{f}{\Morrey{p,\lambda}{\Omega_T}}^p $$
and thence $\pRieszP{\beta}{f}\in \Morrey{p,\lambda}{\Omega_T}$.
\end{proof}

Here we also present the proof of increased regularity by Riesz potential, which acts as a prototype for the proof of H\"older regularity in Theorem \ref{Prop: Reg2-intro}.
\begin{lemma}[Riesz potential estimates between fractional parabolic Morrey space, Regularity II]\label{Lem: Riesz potential2} We claim that if $f(x,t)\in \Morrey{p,\lambda}{\bbR^{n+1}}$ with $1<p<\infty, 0<\lambda\leq n+2a, \frac{\lambda}{p}<\beta<1$, and $\beta p\geq n+2a$, then $\pRieszP{\beta}{f}(x,t) \in C^{\beta - \frac{\lambda}{p}}_{loc, \delta}$, i.e. $|\pRieszP{\beta}{f}(x,t)-\pRieszP{\beta}{f}(x',t')|\leq C(n,s) \delta((x,t),(x',t'))^{\beta - \frac{\lambda}{p}}$ whenever $\delta((x,t),(x',t'))< 1$.
\end{lemma}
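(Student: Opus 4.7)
Fix $z_1,z_2\in\bbR^{n+1}$ with $R:=\delta(z_1,z_2)<1$. Write
\[
\pRieszP{\beta}{f}(z_1) - \pRieszP{\beta}{f}(z_2) = \int_{\bbR^{n+1}}\bigl[K(z_1,z')-K(z_2,z')\bigr]f(z')\,dz',
\]
where $K(z,z') := \delta(z,z')^{\beta-(n+2a)}$, and split the integral into the ``near'' region $A := \{z' : \min(\delta(z_1,z'),\delta(z_2,z'))\leq 2R\}$ and its complement $A^{c}$. This two-scale split is the same device used earlier in the proof of H\"older regularity for $J_1,J_2$ in Theorem \ref{Prop: Reg2-intro}, and will give matching contributions of order $R^{\beta-\lambda/p}$.

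\textbf{Near part.} It suffices to control $\int_{Q_{3R}(z_i)} \delta(z_i,z')^{\beta-(n+2a)}|f(z')|\,dz'$ for $i=1,2$. Decompose dyadically into the annuli $A_k := Q_{2^{-k+1}\cdot 3R}(z_i)\setminus Q_{2^{-k}\cdot 3R}(z_i)$ for $k\geq 0$. On $A_k$, the kernel is bounded by $(2^{-k}\cdot 3R)^{\beta-(n+2a)}$, while H\"older's inequality combined with the fractional parabolic Morrey condition yields
\[
\int_{Q_{2^{-k+1}\cdot 3R}(z_i)}|f|\;\leq\; C\,(2^{-k}R)^{(n+2a)/p'}(2^{-k}R)^{(n+2a-\lambda)/p}\normp{f}{\Morrey{p,\lambda}{\bbR^{n+1}}}.
\]
Multiplying and summing gives $\sum_{k\geq0}(2^{-k}R)^{\beta-\lambda/p}\lesssim R^{\beta-\lambda/p}$, which converges precisely because $\beta>\lambda/p$.

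\textbf{Far part.} On $A^{c}$, one has $\min(\delta(z_1,z'),\delta(z_2,z'))\geq 2R$, hence by the triangle inequality for $\delta$ the quantities $\delta(z_1,z')$ and $\delta(z_2,z')$ are comparable. Applying the mean value theorem to the scalar map $t\mapsto t^{\beta-(n+2a)}$ together with the elementary bound $|\delta(z_1,z')-\delta(z_2,z')|\leq \delta(z_1,z_2)=R$ produces the finite-difference estimate
\[
\bigl|K(z_1,z')-K(z_2,z')\bigr|\;\leq\; C\,R\,\delta(z_1,z')^{\beta-(n+2a)-1}.
\]
Plugging this in and slicing $A^{c}$ into dyadic shells $\{\delta(z_1,z')\in(2^{k}R,2^{k+1}R)\}$ for $k\geq 1$, the same H\"older/Morrey estimate as above yields
\[
\int_{A^{c}}|K(z_1,z')-K(z_2,z')||f(z')|\,dz'\;\lesssim\; R\sum_{k\geq1}(2^{k}R)^{\beta-1-\lambda/p}\normp{f}{\Morrey{p,\lambda}{\bbR^{n+1}}}.
\]
Since $\beta<1$ we have $\beta-1-\lambda/p<0$, so the series converges and the total is $\lesssim R^{\beta-\lambda/p}\normp{f}{\Morrey{p,\lambda}{\bbR^{n+1}}}$.

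Combining the two estimates gives $|\pRieszP{\beta}{f}(z_1)-\pRieszP{\beta}{f}(z_2)|\leq C\normp{f}{\Morrey{p,\lambda}{\bbR^{n+1}}}\,R^{\beta-\lambda/p}$, which is the claim. The condition $\beta p\geq n+2a$ enters only to guarantee that $\pRieszP{\beta}{f}$ is pointwise finite and the comparisons above are meaningful for each representative of $f$. The main technical point to handle carefully is the finite-difference estimate on $K$: the max-metric $\delta$ is not smooth, but the triangle inequality $|\delta(z_1,z')-\delta(z_2,z')|\leq R$ holds at the level of the metric itself and, together with comparability on $A^{c}$, allows one to bypass any direct differentiation of $\delta$. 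Everything else is a routine dyadic summation, strictly parallel in spirit to the $J_{11},J_{12},J_{13}$ decomposition used in Theorem \ref{Prop: Reg2-intro}.
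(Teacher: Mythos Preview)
Your argument is correct and follows the same near/far decomposition as the paper's proof. The paper takes a slightly more indirect route, passing through the Campanato characterization $\Campanato{1,\lambda/p-\beta}{Q_1(z_0)}=C^{\beta-\lambda/p}_\delta$ and proving the technical $L^1$ kernel inequality \eqref{Eqn: Tech1}; for the outer part it introduces an interpolation path $\phi(\theta)(w_x,w_t)=(\theta w_x,\theta^{2a}w_t)$ and differentiates along it. Your treatment of the far region is more elementary and arguably cleaner: by invoking the reverse triangle inequality $|\delta(z_1,z')-\delta(z_2,z')|\leq R$ and applying the one-dimensional mean value theorem to $t\mapsto t^{\beta-(n+2a)}$, you avoid any differentiation of the (non-smooth) metric $\delta$ altogether. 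Both routes yield the same dyadic sums and the same use of $\beta>\lambda/p$ for the inner series and $\beta<1$ for the outer series; the difference is purely in how the kernel increment is controlled on the far set.
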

\begin{proof}
We follow the same argument for estimate of classical Riesz potential in Lemma 3.5 of \cite{Miyakawa90}. By Lemma \ref{Lem: Campanato}, it is equivalent to show that $\pRieszP{\beta}{f}(z)\in \Campanato{1,\frac{\lambda}{p}-\beta}{Q_1(z_0)} = \Campanato{p,\lambda-p\beta}{Q_1(z_0)}=C^{\beta-\frac{\lambda}{p}}_\delta(Q_1(z_0))$. 
It suffices to show that
$$ \dashint_{z\in Q_r(z_0)} \dashint_{z'\in Q_r(z)} \abs{ \int_{\xi \in\bbR^{n+1}} \frac{f(\xi)}{\delta(z',\xi)^{n+2a-\beta}} - \frac{f(\xi)}{\delta(z,\xi)^{n+2a-\beta}} } \lesssim  r^{\beta   - \frac{\lambda}{p}}. $$
We claim the following technical result that if $0<\beta<1$, then
\begin{equation}\label{Eqn: Tech1} \int_{\bbR^{n+1}} \abs{\frac{1}{\delta(z,w)^{n+2a-\beta}} - \frac{1}{\delta(0,z)^{n+2a-\beta}}} dz \lesssim \delta(0,w)^\beta ,\end{equation}
which allows us to finish the proof of this lemma by the following argument (with $\eta = z'-z\in Q_r(0), y=z'-\xi$),
\begin{align*}
&\int_{z'\in Q_r(z)} \int_{\xi \in\bbR^{n+1}} |f(\xi)| \abs{\frac{1}{\delta(z',\xi)^{n+2a-\beta}} - \frac{1}{\delta(z,\xi)^{n+2a-\beta}} } \\
= & \int_{y\in\bbR^{n+1}} \abs{ \frac{1}{\delta(0,y+\eta)^{n+2a-\beta}} - \frac{1}{\delta(y,0)^{n+2a-\beta}}} \int_{\xi\in\bbR^{n+1}}|f(\xi)|\II\cbracket{\delta(\xi,z-y)<r} \\
  \lesssim & \normp{f}{L^p(Q_r(z-y))} r^{(n+2a)(1-\frac1p)} r^\beta \\
  \leq & \normp{f}{\Campanato{p,\lambda}{\bbR^{n+1}}} r^{n+2a-\frac{\lambda}{p}+\beta}.
\end{align*}
To prove inequality \ref{Eqn: Tech1}, we decompose the left hand side integral into inner and outer components. It is easy to verify that metric $\delta$ on $\bbR^{n+1}$ satisfies Triangle Inequality, and hence the inner component
\begin{align*} J_{inner} & = \int_{z\in Q_{2\delta(0,\omega)}(0)} \abs{\frac{1}{\delta(z,w)^{n+2a-\beta}} - \frac{1}{\delta(0,z)^{n+2a-\beta}}}   dz \\
& \leq \int_{z\in Q_{2\delta(0,\omega)}(0)} \frac{1}{\delta(z,w)^{n+2a-\beta}} + \frac{1}{\delta(0,z)^{n+2a-\beta}}   dz \\
&  \leq \int_{z\in Q_{3\delta(0,\omega)}(0)} \frac{1}{\delta(z,0)^{n+2a-\beta}} \lesssim \delta(0,w)^\beta.
\end{align*}
To estimate the outer component, we notice that by triangle inequality $\delta(0,z)\geq 2\delta(0,w)$ implies that $\delta(0,z+\theta w)\geq\delta(0,z)-\delta(0,w)\geq \frac12\delta(0,z) $. As a consequence, when  $\delta(0,z)\geq 2\delta(0,w)$, we define $\phi(\theta)(w_x,w_t)=(\theta w_x, \theta^{2a} w_t)$, which is ``linear" interpolation operator  for $\theta\in[0,1]$ in fractional parabolic space.
\begin{align*}  \abs{\frac{1}{\delta(z,w)^{n+2a-\beta}} - \frac{1}{\delta(0,z)^{n+2a-\beta}}}  \leq&~ \int_0^1 \abs{ \frac{d \delta^{\beta-(n+2a)}}{d\theta}  (z+\phi(\theta)w) } d\theta\\
 \lesssim &~\frac{\delta(0,w)}{\delta(0,z)^{n+2a+1-\beta}} .
\end{align*}
Hence, we integrate above inequality over  $\delta(0,z)\geq 2\delta(0,w)$ and obtain estimate for outer component
\begin{align*}
J_{outer} &= \int_{ \delta(0,z)\geq 2\delta(0,w)} \abs{\frac{1}{\delta(z,w)^{n+2a-\beta}} - \frac{1}{\delta(0,z)^{n+2a-\beta}}} dz \\
&\lesssim \delta(0,w) \int_{ \delta(0,z)\geq 2\delta(0,w)} \frac{1}{\delta(0,z)^{n+2a+1-\beta}} dz \\
& \lesssim \delta(0,w)^\beta,
\end{align*}
where we have used the fact that $\beta\in(0,1)$ to make sure that the last integral converges.
\end{proof}



\section*{Acknowledgments}

F. Lin is partially supported by NSF DMS $2247773$. Y. Sire is partially supported by DMS NSF grant $2154219$, `` Regularity {\sl vs} singularity formation in elliptic and parabolic equations'' . Y. Zhou is supported in part by the Fundamental Research Funds for the Central Universities.

\bibliographystyle{alpha}
\bibliography{RefDatabase,RefDatabase1,biblio,Refdatabase2} 
\end{document}